
\documentclass[journal]{IEEEtran}


\usepackage{amsmath}
\usepackage{float}
\usepackage{cite}
\usepackage{enumerate}
\usepackage{graphicx}
\usepackage{amsfonts}
\usepackage{amssymb}
\usepackage{mathtools}
\usepackage{xcolor}
\usepackage{enumerate}
\usepackage{xspace}
\usepackage{xparse}
\usepackage{anyfontsize}
\usepackage{scalerel}
\usepackage[thmmarks, amsmath, thref, hyperref]{ntheorem}
\usepackage[thinc]{esdiff}

\usepackage{balance}
\usepackage{enumitem}
\usepackage{mathrsfs} 
\usepackage{physics}
\usepackage{stfloats}
\usepackage{blindtext}
\usepackage{epstopdf}
\usepackage[labelformat=simple]{subcaption}

\renewcommand{\qed}{\hfill\blacksquare}

\newlist{alphalist}{enumerate}{1}
\setlist[alphalist,1]{label={C\arabic*}-}

\pdfminorversion=7

\hbadness=99999  
\vbadness=99999  
\setlength {\marginparwidth }{2cm}
\allowdisplaybreaks

\newcommand{\jw}{\mathbf{j}\omega}

\renewcommand{\j}{\mathbf{j}}

\newcommand{\ie}{{i.e. }}
\newcommand{ \RHi} {\mathbb{RH}_{\infty}}
\newcommand{\C}{\mathbf{C}}
\newcommand{\R}{\mathbf{R}}
\newcommand{ \jR} {\j\overline{\R}}
\newcommand{\w}{\omega}
\newcommand{\etc}{\emph{etc}.}
\newcommand{\QC}{\mbox{$\bf{QC}$}}
\newcommand{\QCinv}{\mbox{$\overline{\bf{QC}}$}}

\newtheorem{theorem}{Theorem}

\newtheorem{assumption}{Assumption}
\newtheorem{lemma}{Lemma}

\newtheorem{remark}{Remark}

\newtheorem*{proof}{Proof}

\theoremstyle{nonumberbreak}
\theoremheaderfont{\bfseries}
\theorembodyfont{\normLalfont}
\theoremsymbol{\ensuremath\square}

\newcommand{\kkl}{\textcolor{black}}
\newcommand{\khl}{\textcolor{black}}

\newcommand{\kl}{\textcolor{black}}
\newcommand{\il}{\textcolor{black}}
\newcommand{\icl}{\textcolor{black}}
\newcommand{\ic}{\textcolor{black}}
\newcommand{\ilc}{\textcolor{black}}
\newcommand{\ilf}{\textcolor{black}}
\newcommand{\ilff}{\textcolor{black}}
\newcommand{\iclf}{\textcolor{black}}

\newcommand{\klRev}{\textcolor{black}}
\newcommand{\icr}{\textcolor{black}}

\usepackage[prependcaption,colorinlistoftodos]{todonotes}

\theoremstyle{nonumberbreak}
\theoremheaderfont{\bfseries}
\theorembodyfont{\normLalfont}
\theoremsymbol{\ensuremath\square}

\newcommand{\ir}{\textcolor{black}}
\newcommand{\irr}{\textcolor{black}}


\captionsetup{justification=centering}

\makeatletter
\def\thanks#1{\protected@xdef\@thanks{\@thanks
		\protect\footnotetext{#1}}}
\makeatother

\begin{document}

\title{	Decentralized  Stability Conditions for DC Microgrids: Beyond Passivity  Approaches\thanks{This work was supported by ERC starting grant 679774. } \thanks{The authors are  with the Department of Engineering, University of Cambridge,Trumpington Street, Cambridge CB2 1PZ, United Kingdom.  Emails: \{kl507,jdw69, yo259, icl20\}@cam.ac.uk.}}

\author{Khaled Laib, Jeremy  Watson, Yemi Ojo  and Ioannis Lestas}

\date{}
\maketitle

	\begin{abstract} 
		We consider the problem of ensuring stability in a DC microgrid by means of decentralized conditions. Such conditions are derived which are formulated as input-output properties of locally defined subsystems. These follow from various decompositions of the microgrid and corresponding properties of the resulting representations. It is shown that these stability conditions can be combined together by means of appropriate homotopy arguments, thus reducing the conservatism relative to more conventional decentralized approaches that often rely on a passivation of the bus dynamics.
		Examples  are presented to demonstrate the efficiency and the applicability of the results derived. 	
    \end{abstract}

	   \section{Introduction}	
	 
	   The increasing integration of renewable energy in recent years has strengthened the interest in microgrids. Compared to \iclf{AC} 
	   microgrids,
	   \iclf{DC microgrids} have been recognized as a natural and simple solution to integrate renewable energy,   see~\cite{JMLJ:13}.
	   For instance, DC~microgrids allow   connecting   DC components  directly for a simple integration of renewable \iclf{generation} and  storage units. Moreover, in \il{addition to} being compatible  with modern consumer loads, DC~microgrids  allow to reduce  unnecessary power conversion losses. Thus, DC~microgrids  have become an attractive option not only  \il{for providing} support to remote communities,  but in many  other applications \iclf{such as} mobile grids on ships, aircrafts, \iclf{and trains~\cite{EMM:15}}.

	   A DC~microgrid is a  power network that	consists of small subsystems (generation units, storage units, flexible loads, \etc)  interconnected \iclf{via}   power lines.   
	   \il{A key \il{requirement} in a DC microgrid  
	   	is to \il{ensure 
	   		stability of the network when decentralized feedback control mechanisms are used for voltage/current regulation~\cite{MSTKFLG:17}.}}

	   \il{Two main features} can lead  to network instability: \il{interaction between converters} and load type, see~\cite{MSTKFLG:17,DLVG:16}.

	   \il{In particular, each DC-DC converter} 
	   is  typically  designed to guarantee good stability margins and achieve certain performance levels when operating in a stand-alone condition. However, when interconnecting the different  DC-DC converters in the network, their interaction   can \iclf{affect} the overall performance and \iclf{even} lead to \iclf{instability}.

	   \il{Furthermore, some} DC loads have a destabilizing effect. For instance, in contrast to constant \il{impedance loads 
	   	and constant current loads, 
	   	which normally do not induce stability degradation, constant power loads 
	   	can lead to network instability} due to their negative impedance characteristics.

	   \subsection{Literature review}
	   
	   Various decentralized (linear and nonlinear) control strategies   have been  proposed in the literature to ensure proper functioning of DC~microgrids: \il{droop control schemes}~\cite{GVMVC:11,ZhD:15}, \il{line-independent approaches}~\cite{TRF:18}, cooperative \il{schemes}~\cite{NMDL:15} , passivity-based~\cite{GLH:15,FCS:21},  Lyapunov-based~\cite{IDSBGP:18}, backstepping~\cite{RMOHMM:18}, \iclf{and} sliding-mode \il{control} \iclf{schemes}~\cite{CTDPCFVDS:19}.
	   
	   However, the aforementioned controllers do not handle situations where  constant power loads are present and some of them \ilff{do not incorporate} 
	   the power  line dynamics  and consider them purely resistive.

	   To  address the voltage destabilizing effect of constant power loads, various controllers  have been proposed in the literature to handle general nonlinear ZIP loads, \il{i.e.
	   	parallel combination of constant impedance, current, and power load respectively (denoted as Z, I, P respectively).}

	   In~\cite{PWD:18}, the authors propose a   consensus algorithm   guaranteeing power consensus in a network  with~ZIP  loads. In~\cite{CLKKS:19}, the authors  propose a nonlinear passivity based voltage controller with some robustness  with respect to constant  ZIP loads.    In~\cite{NSMMFT:20}, the authors propose  a  linear state feedback voltage controller to passivate the generation units and  the~ZIP loads connected to it.

	   \il{Existing results that incorporate dynamic line models rely primarily on a passivation of the bus dynamics so as to achieve stability in general network topologies. The latter implies that there are restrictions on the amount of constant power loads that can be incorporated, as these have a non-passive behaviour. Furthermore, many classical converter control architectures, such as ones based on double-loop implementations that provide also current control capabilities, are often hard to passivate in practical designs.
	   	Therefore the development of methodologies that can reduce the conservatism in the stability conditions imposed, while at the same having stability guarantees in general network topologies is an important problem of practical relevance.}

	   \subsection{Main contributions}
	   \il{The objective of this  paper is  to derive   decentralized conditions  through which  stability  of the DC~microgrid  can be established, i.e. conditions on locally defined subsystems. The microgrid representation plays a central role in this context as the notion of a subsystem is not unique. In particular,
	   	any derived stability conditions are inevitably going to
	   	be only sufficient when these are decentralized. Therefore,
	   	different representations of what constitutes a subsystem within the network can
	   	lead to stability results with varying conservatism.}

	   \il{A main idea of our analysis is to consider different decentralized input-output conditions based on various decompositions of a DC network and then combine them together by means of appropriate homotopy arguments. This allows to exploit on the one hand the natural passivity properties of the lines in frequency ranges where the coupling between the buses is high, and also exploit other input-output conditions  in frequency ranges where  there are significant deviations from \ilc{passivity} 
	   	by additionally taking into account the \ilc{strength} of the coupling between buses.}
	   
	   \il{A key contribution of the proposed approach is that it allows to reduce the conservatism in the design relative to more conventional methodologies, such as ones that rely on a passivation of bus dynamics. In particular, it enables larger amounts of constant power loads to be incorporated while guaranteeing stability of the network, and also allows to establish stability for wider classes of practically relevant control architectures.}
	   
	   \il{It should also be noted that the input-output approach adopted allows to consider broad classes of microgrid models involving higher order converter models and line dynamics. Related practical examples  will also be discussed within the paper to demonstrate the significance of the results presented.}


\subsection{Paper outline}	

{The paper} is structured as follows.   Section~\ref{sec:prob_form} presents some graph theory elements,  the DC~microgrid model and the {problem setting.}
Section~\ref{sec:main_results}  presents the main results of the paper while  Section~\ref{sec:num_exp} presents   numerical examples.  Conclusions   are drawn in Section~\ref{sec:concl}.

\subsection{Symbols and notations}
The sets of real and complex numbers are denoted by~$\R$  and $\C$ respectively.  The extended real line $\left[ -\infty,+\infty\right]$ is denoted $\overline{\R}$ and  $\overline{\R}_+$ is the  set of  real positive numbers including~0  and $+\infty$. The imaginary axis is denoted by~$\jR$ where $\j=\sqrt{-1}$.
The right  half-{plane} including the imaginary axis  is denoted by~${\C}_{+}$ and its closure is denoted by~$\overline{\C}_{+}$.
\ilc{The space $\mathscr{L}_{2}^n[0,\infty)$ is the set of 
	signals $f:[0,\infty)\rightarrow\R^n$ 
	that have finite energy $\int_0^\infty \|f(t)\|^2\text{d}t$, and  $\mathscr{H}^n_2$ is the set of \ilc{functions that are  Laplace transforms of   signals in $\mathscr{L}_{2}^n[0,\infty)$}.}
The set 	   $\RHi^{p\times q}$ is the set  of ${p\times q}$ real rational   transfer functions  without poles in~$\overline{\C}_+$. 	
For a	matrix $F \in \C^{n \times n}$ its transpose and conjugate transpose are denoted by   $F^{\top}$ and~$F^*$ respectively.  \ilf{For a	matrix $F \in \C^{m \times n}$,  $\|F\|_\infty$, $\rho(F)$ denote its induced $\infty$-norm and its spectral radius respectively.}
The  identity matrix is denoted by $I$.
The Kronecker product  of two  matrices $F_i$ and $F_j$ is denoted by $F_i \otimes F_j$.
The direct sum
of matrices $F_i$ with $i=1,\cdots,n$  is denoted by $\oplus_{i=1}^{n} F_i$.
Finally, in order to ease the notation, for  matrices $X, \Pi_{11}, \Pi_{12}, \Pi_{22}$ with compatible dimensions and $\epsilon\geq0$, we use  $X\in \QC\left(\Pi,\epsilon  \right)$, with $\Pi= 	\begin{pmatrix} \Pi_{11}&  \Pi_{12}\\
\Pi_{12}^*  & \Pi_{22}	\end{pmatrix}$, to denote 	
\begin{equation}\label{def:QC}	\begin{pmatrix} X \\ I  \end{pmatrix}^*	
~~\Pi~~
\begin{pmatrix} X \\ I  \end{pmatrix}\geq \epsilon X^*X 	  \end{equation}
and $X\in \QCinv\left(\Pi,\bar \epsilon  \right)$ to denote
$$	\begin{pmatrix} I\\ - X  \end{pmatrix}^*
~~\Pi~~
\begin{pmatrix} I\\ - X    \end{pmatrix} \leq - \bar \epsilon  I .$$

\section{Network models and problem {setting}} \label{sec:prob_form}
\subsection{Algebraic graph theory and microgrid signals}
\klRev{The DC microgrid  is a power system   that comprises of~$n_{b}$ buses   and~$n_{\ell}$  {power lines}.
We assume  that each bus includes a  \mbox{DC-DC} converter with its controller, and a load connected to it. 	Note that even if loads are located elsewhere, 		they can be mapped to a point of common coupling (PCC) using Kron reduction, see~\cite{DoB:13}. Fig.~\ref{fig:MGkmodel} is a representative diagram of a microgrid with six buses and six lines while Fig.~\ref{fig:BusScheme} gives \irr{a schematic} of the electrical connection  of the $j^{th}$  bus.}

\begin{figure}
\centering
\includegraphics[width=1\linewidth]{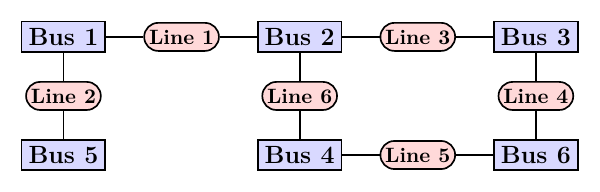}
\caption{\ir{A representative diagram of \ir{a six} bus microgrid.}}
\label{fig:MGkmodel}
\end{figure}

\begin{figure}
\centering
\includegraphics[width=1\linewidth]{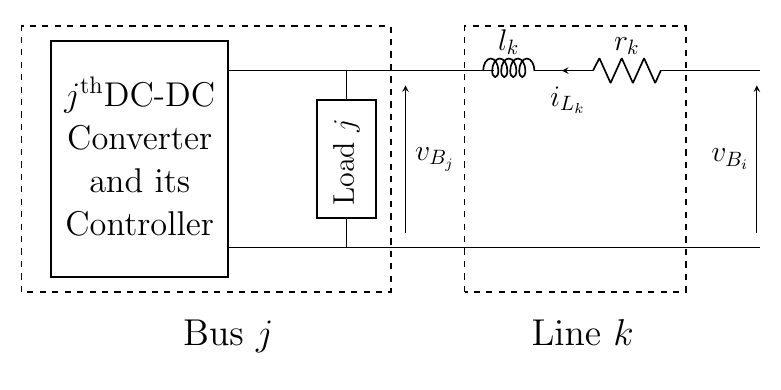}
\caption{\ir{Diagram representing the $j^{th}$  bus (\mbox{DC-DC} converter with its controller, and a load)  \ir{connected to} the  $k^{th}$  line.}}
\label{fig:BusScheme}
\end{figure}

We represent this microgrid as a connected  graph $(\mathcal{N},\mathcal{E})$ where $\mathcal{N}=\left\lbrace 1,2,\cdots, n_{b}\right\rbrace $ is the set of nodes~(buses)
and  $\mathcal{E}=\left\lbrace 1,\cdots, n_{\ell}\right\rbrace \subseteq \mathcal{N}\times \mathcal{N}$ is the set of edges~(lines).
A direction is assigned to each edge which can be arbitrarily chosen.
The  corresponding $n_b \times n_\ell$ incidence matrix is denoted by~$\mathcal{A}$
and it is given by
\begin{equation*}
\mathcal{A}_{jk}=
\left\lbrace
\begin{matrix}
&+1 & \text{if edge $k$ leaves bus $j$,}\\
&-1 & \text{if edge $k$ enters bus $j$,}\\
&~~0 & \text{otherwise.~~~~~~~~~~~~~~}\\
\end{matrix}
\right.
\end{equation*} 	
For each node $j\in \mathcal{N} $,  the set  $\mathcal{E}_j=\left\lbrace k\in \mathcal{E}:  \mathcal{A}_{jk} \neq 0\right\rbrace$ is the set  of   edges  connected to  node $j$.

Given the aforementioned microgrid settings, we define the following  signals.

\begin{itemize}
\item 	The input current {at}   bus $j$ is denoted  by~$i_{B_j}(t)$ and  the bus output
{voltage} is  denoted by~$v_{B_j}(t)$.		
\item     The current through   line  $k$ is denoted by~$i_{L_k}(t)$ {(this denotes the current with the same direction as that of the edge $k$)}.
\item 	The {vector of all bus voltages}  and the vector of all line currents  are denoted  by~$v_B(t)=\left(  v_{B_j}(t)\right) _{j\in \mathcal{N}} $ and $  i_L(t)  =\left(i_{L_k}(t)\right)_{k\in \mathcal{E}}$ respectively.
\end{itemize}

\subsection{Line dynamics}

\il{We consider power lines modeled as RL components.}		
\klRev{The lines of the DC microgrid connect the buses and allow power to be transferred from one bus to another and across the microgrid as a whole. The \ir{current}  flowing across a line is determined by the difference between the voltages at each bus to which
the line is connected.}  
By applying  Kirchoff’s voltage law  on the $k^{th}$ {power line}, with $k\in \mathcal{E}$, we obtain
\begin{equation} \hspace{-0.1cm}
\diff{i_{L_k}(t)}{t} =    \dfrac{-r_k}{l_k}
i_{L_k}(t) + \dfrac{1}{l_k} \delta V_{L_k} (t)
\label{eq:Line_dynamics}
\end{equation}
where $r_k>0$ and $l_k>0$ are the resistance and the inductance of $k^{th}$ line   and $\delta V_{L_k}(t)$  is the voltage difference  {across line $L_k$.} 

\il{
It should be noted though that the stability results that will be presented also hold
if the transfer function from~$\delta V_{L_k}$  to~$i_{L_k}$ is any strictly positive real function.
This allows, \il{for example,} to consider also more advanced line models that include capacitances or distributed parameter models as in \cite{WOLL:20}.
Extensions to cases where this transfer function is positive real rather than strictly positive real, will also be considered in Theorem~\ref{thm:Main_result_2}.
}

\subsection{Bus dynamics}

\klRev{As already mentioned, each bus includes a DC-DC converter with its own controller, and a load connected to it. The voltage at each bus is controlled via the DC-DC converter using local information only (i.e. the states of the bus and the input current from the microgrid).}

We consider 	 a general bus model  to account for  a broad class of DC-DC converters, controllers and loads. 	
DC-DC converters (buck, boost, buck-boost, SEPIC, \etc) are composed of three main stages: DC stage (battery stage), switching stage  and DC output stage (output-voltage stage).

\ir{Average models, i.e. models described by continuous ODEs,  are commonly used in the literature to describe the converter dynamics so as to carry out stability analysis and control design. These are justified under the following assumption.}

~

\begin{assumption}

\klRev{	The switching \ir{of the converter} is performed at a frequency much higher than the \ir{timescale of its control policies}.} 
\label{assump:converter}
\end{assumption}

\ir{Thereafter, average models will be used throughout the paper to model the converter dynamics.}

The converter dynamics as well as the controller dynamics vary depending on the DC~microgrid voltage level (low, medium and high), model complexity, control strategy, \etc. Therefore,   the converter and the controller  dynamics will be kept {in a general representation for analysis and particular implementations will be discussed} 
in the  {examples of}  Section~\ref{sec:num_exp}. For the \ilf{loads}, we consider a {general}  
ZIP load model {which includes constant impedance, constant current and constant power loads}.

A general form  of the $j^{th}$ bus  dynamics {can}
be described as  single-input single-output dynamical system with $i_{B_j}(t)$ as input  and $v_{B_j}(t)$  as output. These dynamics are {represented}
as follows  
\begin{equation}
\left\lbrace
\begin{matrix}
&\diff{x_{B_j}(t)}{t} =
f_{B_j}\big( x_{B_j}(t),    i_{B_j}(t)\big) \\
& ~~  v_{B_j}(t) =  g_{B_j}\big( x_{B_j}(t),    i_{B_j}(t)\big)
\end{matrix}
\right.   \label{eq:bus_dynmics}
\end{equation}
where
$x_{B_j}(t) \in \R^{{n_{x_{B_j}}}}$ is the state 
vector at each bus ({includes} converter, controller, and load states),  $f_{B_j}$ and $g_{B_j}$ {are functions of the form} $f_{B_j}  :\R^{{n_{x_{B_j}}}} \times \R \rightarrow \R^{{n_{x_{B_j}}}} $ and $g_{B_j}  :\R^{{n_{x_{B_j}}}} \times \R \rightarrow \R$.

\subsection{Microgrid small-signal model}

The bus model~\eqref{eq:bus_dynmics} is in general nonlinear due to the converter  and load dynamics even when  considering linear controllers;  hence the    microgrid model is also~nonlinear.		
Equilibrium points can be found by setting the  time derivatives in~\eqref{eq:Line_dynamics}-\eqref{eq:bus_dynmics}  to zero and then {solving} the resulting system of equations.

\klRev{Finding \ir{\irr{equilibrium points}} in a power grid is the well-known power-flow
problem, which has been studied in depth \ir{e.g. \cite{gar:18,mon:20}.}
\icr{Load / generation fluctuations result in deviations 	from \irr{a nominal} operating point, however when these are small, which is usually the case under normal operating conditions, a small signal analysis can be used for stability analysis and control design.}} 

\klRev{Thereafter, a linearization \icr{is} performed  using an obtained  equilibrium.	{For this purpose, we require   the following assumption.	}}

~ 	
\begin{assumption} The system~\eqref{eq:Line_dynamics}-\eqref{eq:bus_dynmics} admits an equilibrium. Moreover, the vector functions   $f_{B_j}$  and the  functions~$g_{B_j}$ in~\eqref{eq:bus_dynmics} are  Lipschitz around the considered~equilibrium \mbox{of~\eqref{eq:Line_dynamics}-\eqref{eq:bus_dynmics}}.
\label{assump:linearization}
\end{assumption}

Under Assumption~\ref{assump:linearization}, the system~\eqref{eq:Line_dynamics}-\eqref{eq:bus_dynmics} can be linearized
about the equilibrium being considered. In order to analyze this
linearization, let  $\overline{q}(t)=q(t)-q^{eq}$  denote the deviation of
any quantity~$q(t)$  from its equilibrium value~$q^{eq}$. We denote the microgrid equilibrium by  $q^{eq}$   given by 
\begin{equation}
q^{eq}=  \begin{pmatrix}
\big( i_L^{eq}\big)^{\top}, & \big(v_B^{eq}\big)^{\top}, &
\big(x_B^{eq}\big)^{\top}
\end{pmatrix}^{\top}
\label{eq:equilibrium}
\end{equation}
with $x_B^{eq}=( x^{eq}_{B_j})_{j \in \mathcal{N}}$. 	 
Finally, we adopt an input-output representation of the  small-signal model of  microgrid~\eqref{eq:Line_dynamics}-\eqref{eq:bus_dynmics},
and we introduce the following two sets  of transfer  functions

\begin{itemize}
\item  $L_k(s)$  is the  transfer  function   of the line  dynamics~\eqref{eq:Line_dynamics},   from the input $\overline{\delta V_{L_k}}(t)  $ to the output $ \overline{i_{L_k}}(t)$;		

\item  $B_{j}(s)$  is the  transfer  function  {of} the linearized version of the bus dynamics~\eqref{eq:bus_dynmics}  from  the input $\overline{i_{B_j}}(t)$ to the output $ \overline{v_{B_j}}(t)$.
\end{itemize}

Note that  the different $L_k(s)$, obtained from~\eqref{eq:Line_dynamics},  are   in   $\RHi$   as   $-r_kl_k^{-1}<0$.
\kkl{ We will derive conditions on the 	frequency response of locally defined subsystems   under which
stability of the power system~\eqref{eq:Line_dynamics}-\eqref{eq:bus_dynmics} about the equilibrium~\eqref{eq:equilibrium}    is guaranteed. To do this, we introduce the following \ilc{assumption.}}

\kkl{\begin{assumption}\label{assump:stab_bus}
For each bus $j$,    $B_{j}(s)\in\RHi$ with a  stabilizable and detectable state-space realization\footnote{\kkl{The stabilization and the detectability assumptions ensure that there are no \ilc{pole/zero cancellations in ${\C}_{+}$}  in the transfer function}.}.
\end{assumption}}

Let   $ \overline{v_B}(t)=\left(  \overline{v_{B_j}}(t)\right) _{j\in \mathcal{N}} $ and  $ \overline{i_L}(t)=\left(  \overline{i_{L_k}}(t)\right) _{k\in \mathcal{E}} $; then $\overline{\delta v_{L_k}}(t)= \left( a^c_k\right)^{\top}      \overline{v_B}(t)$ and $\overline{i_{B_j}}(t)=- ~a^r_{j}~  \overline{i_{L}}(t)$  where~$a^c_k $ and~$~a^r_{j}~ $ are  the $k^{th}$ column and  the $j^{th}$ row of the incidence matrix~$\mathcal{A}$ respectively. 	\\	

\begin{figure}[t]
\centering
\includegraphics[width=1\linewidth]{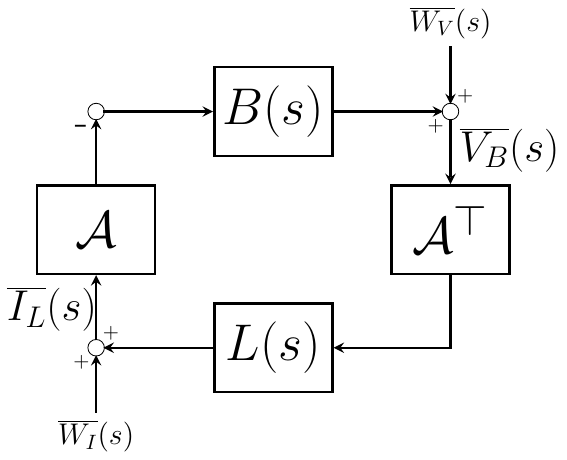}
\caption{\khl{Block diagram representing the small-signal \ilf{model}~\eqref{eq:bipartite_eqt}}.}
\label{fig:networkmodel}
\end{figure}		
\kkl{The  small-signal model of the microgrid~\eqref{eq:Line_dynamics}-\eqref{eq:bus_dynmics}  can be represented as a negative feedback interconnection of \mbox{input-output} systems
as 	illustrated in  Fig.~\ref{fig:networkmodel}	  
and is  given by 
\begin{equation}\left\lbrace
\begin{matrix}
\overline{V_B}(s)= -B(s)~\mathcal{A}~~ \overline{I_L}(s)+\overline{W_V}(s)\\
\overline{I_L}(s)= ~~L(s)~ \mathcal{A}^{\top}~ \overline{V_B}(s) +\overline{W_I}(s)\\
\end{matrix} \right.
\label{eq:bipartite_eqt}
\end{equation}
where $ B(s)=\oplus_{j=1}^{n_b}  B_{j}(s)$ and $L(s)=\oplus_{k=1}^{n_\ell}  L_k(s)$
with $B_j(s) \in \RHi$  and  $L_k(s) \in \RHi$.
$\overline{W_V}(s)$ and $\overline{W_I}(s)$ \ilc{are signals associated  with the initial conditions. In particular, these} are given, \ilc{respectively,} by   $\overline{W_V}(s)=\Theta_B(s)~ \overline{x_B}(0)$ and $\overline{W_I}(s)=L(s) \left( \oplus_{l=1}^{n_\ell}l_k\right)~ \overline{i_L}(0)$, \ilc{where}
$\overline{x_B}(0)$ and $\overline{i_L}(0)$ are the initial \ilc{values} 
of   $( x_{B_j}(t))_{j \in \mathcal{N}}$ and $\overline{i_L}(t)$,  \iclf{respectively,} and
\begin{equation}
\Theta_B(s)= \left(\oplus_{j=1}^{n_b} C_{B_j} \right) \left(sI-\oplus_{j=1}^{n_b} A_{B_j} \right)^{-1}
\label{eq:Theta_B}
\end{equation} 
where $C_{B_j}$ and $A_{B_j}$ are  the output matrix and  the state matrix in the state-space representation of $B_j(s)$.
Note that  $\Theta_B(s) $ is in $ \RHi$ (from Assumption~\ref{assump:stab_bus}), hence both  signals $\overline{W_V}(s)$ and $\overline{W_I}(s)$ are in~$\mathscr{H}_2$.}

\section{Main results}\label{sec:main_results}

We derive in this section sufficient conditions for the local  stability of the    DC~microgrid~\eqref{eq:Line_dynamics}-\eqref{eq:bus_dynmics} \il{which are formulated as decentralized} 
input-output conditions on  different subsystems.  	As mentioned in the introduction,
\il{the notion of a subsystem is not unique in a network, hence different network decompositions can lead to decentralized conditions with different relative merits. }

\il{The main significance of Theorem~\ref{thm:Main_result_1} below is that it considers multiple such conditions associated with two different network decompositions and combines them together pointwise over frequency by means of appropriate homotopy arguments.}

\il{The significance of the conditions in   Theorem~\ref{thm:Main_result_1}
are discussed in remarks that follow.}

~

\begin{theorem}
Under Assumptions~\ref{assump:converter}-\ref{assump:stab_bus}, 	
the     equilibrium~\eqref{eq:equilibrium} of \kkl{the} power system~\eqref{eq:Line_dynamics}-\eqref{eq:bus_dynmics} with its  small-signal model~\eqref{eq:bipartite_eqt}   is locally asymptotically stable
if for  all~${\omega \in \overline{\R}_+}$ at least one  of the  following two statements is satisfied.			
\begin{itemize}[wide, labelwidth=!, labelindent=0.0cm]
\item {\bf Statement~1:} For every $j\in \mathcal{N}$, there exist   scalars	$\gamma_{j1}(\w)\geq 0$ and $\gamma_{j2}(\w)\geq 0$   and $\epsilon_{B_j}(\w)> 0$   		such that			
\begin{equation}
\begin{matrix}
B_j(\jw)\in \QC\left(\gamma_{j1}(\w)\Pi^{B_j}_1+\cdots~~~~~~~~~~~~ \right. \\\cdots+\left. \gamma_{j2}(\w)\Pi^{B_j}_2(\jw),\epsilon_{B_j}(\w) \right)		\end{matrix}
\label{eq:eqtStat1}
\end{equation}		
where  
\begin{equation}
\Pi^{B_j}_1=\begin{pmatrix}
0 &~& 1 \\ 1 &~& 0
\end{pmatrix}~~~~~~~~~~~~~~~~~~~~~~~~~~~~~~~~
\label{eq:pi1Bj}
\end{equation}
\begin{equation}
\Pi^{B_j}_2(\jw)=\begin{pmatrix}
\khl{-J_{B_j}(\jw)} & ~~0 \\ ~~0 & ~~\khl{J_{B_j}(\jw)^{-1}} 
\end{pmatrix}
\label{eq:pi2Bj}
\end{equation}	
with \khl{\begin{equation}
J_{B_j}(\jw)= \left|  \sum_{k: k\in \mathcal{E}_j} L_k(\jw)\right|  +  \sum_{k: k\in \mathcal{E}_j} \left|  L_k(\jw)\right|.
\label{eq:JBj}
\end{equation}	}
\item {\bf Statement~2:} For every $j\in \mathcal{N}$, there exist   scalars	$\delta_{j1}(\w)\geq 0$,  $\delta_{j2}(\w)\geq 0$,  $\delta_{j3}(\w)\geq 0$  and $\epsilon_{G_j}(\w)> 0$ and scalars~$\Pi^k_{11}(\jw)=(\Pi^k_{11}(\jw))^*\leq 0$, $\Pi^k_{12}(\jw)$ and $ \Pi^k_{22}(\jw) =(\Pi^k_{22}(\jw))^*\geq 0$, with $k=1,\cdots,n_\ell$,     satisfying {$ -\Pi^k_{12}(\jw)-\Pi^k_{12}(\jw)^*+ 2~\Pi^k_{22}(\jw)   \leq~0,$}  such  that	
\begin{equation}
\begin{matrix}
G_j(\jw)\in \QC\left(\delta_{j1}(\w)\Pi^{G_j}_1+\delta_{j2}(\w)\Pi^{G_j}_2+\cdots ~~~~\right. \\\cdots+\left.  \delta_{j3}(\w) \Pi^{G_j}_3(\jw),\epsilon_{B_j}(\w) \right)		\end{matrix}
\label{eq:eqtStat2}
\end{equation}
where  $G_j(\jw)= (\oplus_{k=1}^{n_\ell} L_k(\jw)) ~(a^r_j)^{\top}~ B_j(\jw)~a^r_{j}$ and  
\begin{equation}
\Pi^{G_j}_1=\begin{pmatrix}
0&  I_{n_\ell}\\ I_{n_\ell}&0
\end{pmatrix}~~~~~~~~~~~~~~~~~~~~~~~~~~~~~~~~~~~~
\label{eq:pi1Gj}
\end{equation}
\begin{equation}
\Pi^{G_j}_2=\begin{pmatrix}
-\khl{2I_{n_\ell}}  & ~~0 \\ ~~0 & ~~\khl{2^{-1}I_{n_\ell}}
\end{pmatrix}~~~~~~~~~~~~~~~~~~~~~~~~~~~~~~
\label{eq:pi2Gj}
\end{equation}	
\begin{equation}
\Pi^{G_j}_3(\jw)=\begin{pmatrix}
\oplus_{k=1}^{n_\ell} \Pi^k_{11}(\jw)~&  \oplus_{k=1}^{n_\ell} \Pi^k_{12}(\jw)\\   \oplus_{k=1}^{n_\ell} \Pi^k_{12}(\jw)^*  &  \oplus_{k=1}^{n_\ell} \Pi^k_{22}(\jw)~
\end{pmatrix}.\\[3ex]
\label{eq:pi3Gj}
\end{equation}
\end{itemize}
\label{thm:Main_result_1}
\end{theorem}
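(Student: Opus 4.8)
The plan is to reduce local asymptotic stability to a pointwise‑in‑frequency nonsingularity condition on the return difference of~\eqref{eq:bipartite_eqt} and then establish that condition by a homotopy in a loop‑scaling parameter, at each frequency invoking whichever of the two Statements holds there. By Assumption~\ref{assump:stab_bus} the state‑space realizations of $B(s)=\oplus_{j}B_j(s)$ and $L(s)=\oplus_{k}L_k(s)$ have no hidden modes in $\overline{\C}_+$, so the equilibrium~\eqref{eq:equilibrium} is locally asymptotically stable provided the linearized feedback loop~\eqref{eq:bipartite_eqt} is internally stable, i.e.\ all closed‑loop maps from the initial‑condition signals $\overline{W_V},\overline{W_I}\in\mathscr{H}_2$ lie in $\mathscr{H}_2$; this holds as soon as $\det\!\big(I+B(s)\mathcal{A}L(s)\mathcal{A}^{T}\big)\neq0$ for all $s\in\overline{\C}_+$, including $s=\infty$. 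Since $B\mathcal{A}L\mathcal{A}^{T}\in\RHi$, I would obtain the latter through the argument principle: it suffices to show that $Z_\tau(\jw):=I+\tau\,B(\jw)\mathcal{A}L(\jw)\mathcal{A}^{T}$ is nonsingular for all $\tau\in[0,1]$ and all $\omega\in\overline{\R}_+$ (the remaining frequencies following by conjugate symmetry), since then the winding number of $\omega\mapsto\det Z_\tau(\jw)$ about the origin, being integer‑valued, continuous in $\tau$, and zero at $\tau=0$ where $Z_0\equiv I$, is identically zero, so $Z_1=I+B\mathcal{A}L\mathcal{A}^{T}$ has no zero in $\overline{\C}_+$. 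Because $\det Z_\tau(\jw)=\det\!\big(I+\tau\,L(\jw)\mathcal{A}^{T}B(\jw)\mathcal{A}\big)$, either the $n_b\times n_b$ or the $n_\ell\times n_\ell$ form may be used.

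It then remains, for each fixed $\omega$, to rule out $\det Z_\tau(\jw)=0$ for $\tau\in[0,1]$. Two facts are used in both cases. First, the locally‑defined quadratic constraints are preserved under the loop scaling: if $B_j(\jw)\in\QC\!\big(\Pi,\epsilon_{B_j}(\omega)\big)$ with $\Pi$ a nonnegative combination of $\Pi^{B_j}_1$ and $\Pi^{B_j}_2(\jw)$, then $\tau B_j(\jw)\in\QC\!\big(\Pi,\epsilon_{B_j}(\omega)\big)$ for every $\tau\in[0,1]$, since the associated scalar function of $\tau$ is concave (its coefficient of $\tau^2$ is nonpositive because the $(1,1)$‑block of $\Pi^{B_j}_2(\jw)$ is negative while $\Pi^{B_j}_1$ contributes only a linear term) and is nonnegative at $\tau\in\{0,1\}$; the same holds for $G_j(\jw)$ using $\Pi^k_{11}(\jw)\le0$ and $\Pi^k_{22}(\jw)\ge0$. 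Second, if $Z_\tau(\jw)$ were singular there is $v\neq0$ with $v=-\tau B(\jw)\mathcal{A}L(\jw)\mathcal{A}^{T}v$; with $u:=-\mathcal{A}L(\jw)\mathcal{A}^{T}v$ one has $v_j=\tau B_j(\jw)u_j$, and $(v_j,u_j)$ may be substituted into the scaled constraints (for Statement~2 one instead takes $x\neq0$ with $x=-\tau L(\jw)\mathcal{A}^{T}B(\jw)\mathcal{A}x$ and uses $\sum_j G_j(\jw)=L(\jw)\mathcal{A}^{T}B(\jw)\mathcal{A}$). When Statement~1 holds at $\omega$, I would combine the $n_b$ resulting scalar inequalities with the structure of $\mathcal{A}L(\jw)\mathcal{A}^{T}$ — a complex‑symmetric weighted Laplacian with strictly positive‑real edge weights, whose $j$‑th absolute row sum is dominated by $J_{B_j}(\jw)=|(\mathcal{A}L(\jw)\mathcal{A}^{T})_{jj}|+\sum_{k\in\mathcal{E}_j}|L_k(\jw)|$ — together with the estimate $J_{B_j}^{-1}|u_j|^2\le\sum_{k\in\mathcal{E}_j}|L_k(\jw)|\,|v_{n(j,k)}|^2+|(\mathcal{A}L(\jw)\mathcal{A}^{T})_{jj}|\,|v_j|^2$ (Cauchy--Schwarz, with $n(j,k)$ the endpoint of edge $k$ other than $j$) and the disk interpretation of the constraint on $B_j(\jw)$, to contradict $\sum_j\epsilon_{B_j}(\omega)|v_j|^2>0$; a scaled‑Gershgorin / dominant‑node argument is the natural way to organize this. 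When Statement~2 holds at $\omega$, the analogous computation tests~\eqref{eq:eqtStat2} along $x$ and exploits the multiplier inequalities $\Pi^k_{11}\le0$, $\Pi^k_{22}\ge0$, $-\Pi^k_{12}-(\Pi^k_{12})^{*}+2\Pi^k_{22}\le0$, which are exactly what is needed to absorb the inter‑bus coupling transmitted through each line $L_k(\jw)$, and again contradicts strict positivity.

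The main obstacle I anticipate is the algebraic core of this last step: producing a single contradiction from the decentralized constraints while the weights ($\gamma_{j1},\gamma_{j2}$ in Statement~1; $\delta_{j1},\delta_{j2},\delta_{j3}$ and the multipliers $\Pi^k$ in Statement~2) are chosen independently at each bus and line and need not be consistent between neighbouring buses, and while the passivity block $\Pi^{B_j}_1$ and the gain block $\Pi^{B_j}_2(\jw)$ act in opposite directions. The argument has to use the precise form of $J_{B_j}(\jw)$ — both the term $|\sum_{k\in\mathcal{E}_j}L_k(\jw)|$ and the term $\sum_{k\in\mathcal{E}_j}|L_k(\jw)|$ — so that the off‑diagonal coupling of $\mathcal{A}L(\jw)\mathcal{A}^{T}$ is controlled regardless of those weights. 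Secondary technical points are the handling of $\omega=\infty$ and of feedthrough / well‑posedness of the loop at $\tau=1$, and checking that the homotopy can indeed be taken to be the plain loop‑scaling used above rather than one that also deforms the multipliers.
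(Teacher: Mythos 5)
Your overall architecture coincides with the paper's: reduce local asymptotic stability to nonsingularity of $I+\tau B(\jw)\mathcal{A}L(\jw)\mathcal{A}^{T}$ for all $\tau\in[0,1]$ and $\omega\in\overline{\R}_+$, use the loop-scaling homotopy $B_j\mapsto\tau B_j$ (and $G_j\mapsto\tau G_j$) with the concavity-in-$\tau$ argument to show the quadratic constraints survive the scaling, and conclude via a winding-number / multivariable-Nyquist argument. Your determinant identity $\det(I+B\mathcal{A}L\mathcal{A}^{T})=\det(I+L\mathcal{A}^{T}B\mathcal{A})$ also delivers, essentially for free, what the paper proves in its Part~3 (the two return-ratios share nonzero eigenvalues), so that the two decompositions can be invoked at different frequencies within one Nyquist plot.

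The genuine gap is precisely the step you flag as "the main obstacle": you never verify that the network-side operators satisfy the \emph{complementary} quadratic constraints for the bus-dependent nonnegative combinations of multipliers, and without this the per-bus inequalities cannot be aggregated into a contradiction via the graph-separation lemma. This verification is the substantive content of the paper's proof. For Statement~1 it shows (i) $\mathcal{A}L(\jw)\mathcal{A}^{T}\in\QCinv\left(\Pi^{B}_1,0\right)$ from passivity of each $L_k$, and (ii) $\mathcal{A}L(\jw)\mathcal{A}^{T}\in\QCinv\left(\Pi^{B}_2(\jw),0\right)$, i.e.\ $\Xi^*J_B^{-1}\Xi\leq J_B$ with $\Xi=\mathcal{A}L\mathcal{A}^{T}$, by computing $\|J_B^{-1}\Xi\|_\infty=\|J_B^{-1}\Xi^*\|_\infty=1$ (your row-sum/Gershgorin observation is exactly this norm computation) and using that $J_B^{-1}\Xi^*J_B^{-1}\Xi$ has real eigenvalues; the two are then combined with diagonal weight matrices $\Gamma_1(\w),\Gamma_2(\w)$ so that the condition decouples into \eqref{eq:eqtStat1}. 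For Statement~2 your device of testing \eqref{eq:eqtStat2} along a single vector $x$ satisfying $x=-\tau L\mathcal{A}^{T}B\mathcal{A}x$ together with $\sum_j G_j=L\mathcal{A}^{T}B\mathcal{A}$ does not work as stated, because the hypotheses constrain each $G_j$ separately on $\C^{n_\ell}$, not their sum: what is needed is the lifted interconnection \eqref{eq:GA} of the block-diagonal $G=\oplus_j G_j$ with the constant matrix $A=MM^*$, together with the three facts $A=A^*\geq0$, $\|A\|_\infty=2$ (hence $A^*A\leq 4I$, which is where $\Pi^{G_j}_2$ comes from), and the structure computation \eqref{eq:AforPi3} showing $A\in\QCinv\left(\Pi^{G}_3(\jw),0\right)$ under the stated sign conditions on $\Pi^k_{11},\Pi^k_{12},\Pi^k_{22}$. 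Supplying these network-side verifications (and the diagonal-weight combination step) is what turns your plan into the paper's proof; as written, the core of the argument is still missing.
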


\begin{proof} 	See Appendix~\ref{app:Thm1_proof}.	\\	\end{proof}

\begin{remark}[Microgrid decomposition]
The~stability conditions in Statement~1 and Statement~2 are decentralized conditions that  depend on local bus/line dynamics.  They are obtained using two different decompositions of the microgrid that lead to appropriate graph separation arguments \cite{Les:11}. In particular, Statement~1 is derived {by means of}   the conventional microgrid decomposition into buses and lines.  Statement~2 on the other hand  is based on a different decomposition of the microgrid, {analogous to the one used in}~{\cite{LWL:20,Les:11}}, that leads to subsystems $G_j$ involving each bus $B_j$ and the lines~$L_k$ connected to it  {as follows from}  the sparsity structure of~$a^r_{j}$.\\
\end{remark}

\begin{remark}[Passivity and small-gain conditions]
\label{rem:link_with_passivity_and_small_gain}
The different conditions of Statement~1 and Statement~2  can be related to the usual passivity and small-gain conditions. In particular, in Statement~1, having  $\gamma_{j2}(\w)=0$ in~\eqref{eq:eqtStat1}  allows to recover the usual bus $B_j$  passivity conditions while choosing $\gamma_{j1}(\w)=0$ allows to recover a small-gain condition on each bus $B_j$ scaled \ilf{by $J_{B_j}$, where the latter depends on the neighbouring line dynamics $L_k$}.  For Statement~2,
we can have similar interpretations as earlier but   on  systems $G_j$ this time. For instance, 	   choosing  $\delta_{j2}(\w)=0$ and $\delta_{j3}(\w)=0$ in~\eqref{eq:eqtStat2}  allows to have a passivity   condition on $G_j$ while having  $\delta_{j1}(\w)=0$ and $\delta_{j3}(\w)=0$  allows     to recover a small-gain condition on $G_j$.    Finally, it is worth mentioning that in contrast to the passivity \ilff{condition in Statement 1,}
the third component in~\eqref{eq:eqtStat2} obtained with $\delta_{j1}(\w)=0$ and $\delta_{j2}(\w)=0$  allows  to take into account  the dynamics of the  lines   as  each  $\Pi^k$ can be   associated 	to each~$L_k$.\\ 
\end{remark}

\begin{remark}[Conservatism]
\label{rem:conservatisme}
Condition~\eqref{eq:eqtStat1} allows to combine passivity with small-gain conditions (see Remark~\ref{rem:link_with_passivity_and_small_gain}),  {thus} reducing the conservatism of more {conventional 
passivity based results often used in the literature}.
{Conditions~\eqref{eq:eqtStat1} and~\eqref{eq:eqtStat2} allow}
to reduce the conservatism by combining passivity with small-gain conditions but also with  other  {conditions} able to take into account {the} line dynamics {(see Remark~\ref{rem:link_with_passivity_and_small_gain}).}	
Note that when considered individually, neither Statement~1 nor Statement~2 is less conservative compared to {the} other. For instance,  Statement~1 {considers} the {more commonly used} bus/line decomposition  \ilff{and \eqref{eq:pi1Bj}} and does not take into account the 'strength' of coupling among the bus dynamics at each frequency. On the other hand, even {though} Statement~2 {allows} to consider this {coupling}, it may not always hold when the couplings {are too strong}.
Hence each statement has its own merits and a main  contribution  of Theorem~\ref{thm:Main_result_1} is to show that conditions~\eqref{eq:eqtStat1} and~\eqref{eq:eqtStat2} can be combined together pointwise over frequency by  an appropriate {homotopy argument (see {proof} in  Appendix~\ref{app:Thm1_proof})}, thus reducing the conservatism associated with these decentralized conditions. \\
\end{remark}

\begin{remark}[Control design]
\label{rem:control_design}
{The stability conditions stated in   Theorem~\ref{thm:Main_result_1}}	
can be used as   design protocols for the microgrid that {need} to be decided a priori, \ie  local
design rules at each bus which if satisfied   ensures stability of a general network.
{An approach when choosing such rules is to consider different conditions in different frequency ranges.}	For instance, the passivity  conditions can be used in regimes of higher gains as passivity 	 {holds} for {arbitrarily} large gains while small-gain conditions, {or conditions that take into account the strength of the coupling,} can  be considered in regimes with weaker {coupling} 	and potential phase lags.	{It should be noted that,}
the generalized KYP Lemma~\cite{iwasaki2005generalized} can  be used {to verify} 	if the different required properties are {satisfied}
in the  {corresponding}
frequency ranges.		
\end{remark}

\il{In Theorem~\ref{thm:Main_result_1}, the transfer \kkl{functions} $B_j(s), L_k(s)$ are in~$\RHi$. We consider here also the case \kkl{where} $L_k(s)$ has a pole on the imaginary axis at the origin, which is a more involved problem. This corresponds, for example, to the case where the lines are purely inductive, which is an assumption often made in AC grids at the transmission \ilc{level\footnote{\ilc{It should be noted that the swing equation with higher order generation dynamics has a small-signal model analogous to that in \eqref{eq:bipartite_eqtNN}.
}}.} For DC grids this assumption is less common, but it can be relevant in future  superconducting DC systems where the transmission lines have very small resistance (see e.g.~\cite{JLADSCA:93}).}

\kkl{ 
\ilc{We state below the class of} transfer functions $L_k(s)$ that we consider.}

\kkl{ \begin{assumption}
\label{assump:line_general_dynamic} For each $k$, we assume that  $L_k(s)$ is a proper positive real, proper real    rational transfer function with  a  stabilizable and detectable state-space realization  and with a pole at the origin and no other poles in $\overline{\C}_{+}$. 
\end{assumption} }

\kkl{ Similarly to~\eqref{eq:bipartite_eqt},	the  small-signal model of the microgrid can be represented as follows
\begin{equation}\left\lbrace
\begin{matrix}
\overline{V_B}(s)= -B(s)~\mathcal{A}~~ \overline{I_L}(s)+\overline{W_V}(s)\\
\overline{I_L}(s)= ~~L(s)~ \mathcal{A}^{\top}~ \overline{V_B}(s) +\overline{W_L}(s)\\
\end{matrix} \right.
\label{eq:bipartite_eqtNN}
\end{equation}		
where $ B(s)=\oplus_{j=1}^{n_b}  B_{j}(s)$ and $L(s)=\oplus_{k=1}^{n_\ell}  L_k(s)$
with $B_j(s) \in \RHi$  and  $L_k(s) \notin \RHi$.	
The signal    $\overline{W_V}(s)$  is the same as in~\eqref{eq:bipartite_eqt} while $\overline{W_L}(s)=\Theta_L(s) \overline{x_L}(0)$  where  $\overline{x_L}(0)$  is the initial \ilc{value} 
of~$( \overline{x_{L_k}}(t))_{k \in \mathcal{E}}$ (with $\overline{x_{L_k}}(t)$   the state vector of $L_k$) and
\begin{equation}
\Theta_L(s)= \left(\oplus_{k=1}^{n_\ell} C_{L_k} \right) \left(sI-\oplus_{k=1}^{n_\ell} A_{L_k} \right)^{-1}
\label{eq:Theta_L}
\end{equation}
with $C_{L_k}$ and $A_{L_k}$    the output matrix and  the state matrix in the state-space representation of $L_k(s)$. Note that $\Theta_L(s)$ has one pole at the origin (can be \ilff{deduced} from Assumption~\ref{assump:line_general_dynamic}), hence   $\overline{W_L}(s)$ is not in $\mathscr{H}_2$.}

\il{Theorem~\ref{thm:Main_result_2}  states that the conditions in  Theorem~\ref{thm:Main_result_1} can still be used to deduce convergence to an equilibrium point under an additional positivity condition at $\omega=0$.}

\il{\begin{theorem}			
Consider    the small signal model~\eqref{eq:bipartite_eqtNN} under Assumptions~\ref{assump:converter}-\ref{assump:line_general_dynamic}. Then, for all initial conditions, the voltage and the current deviations    $\overline{v_B}(t)$ and  $\overline{i_L}(t)$  converge to a \ilc{constant 
value} if  	
\begin{alphalist}
\item 	For  all~${\omega \in \overline{\R}_+\setminus\{0\}}$, at least one of the statements of Theorem~\ref{thm:Main_result_1} is satisfied;\\
\item   For all $j\in \mathcal{N}$	
\begin{equation}
B_j(\j0)\in \QC\left(\Pi^{B_j}_1,\epsilon_{B_j}(0) \right).
\label{eq:main_2_eqt_2}
\end{equation}
for some $\epsilon_{B_j}(0)>0$.			
\end{alphalist}
\label{thm:Main_result_2}
\end{theorem}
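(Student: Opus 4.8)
The plan is to show that, under the stated hypotheses, the Laplace transforms of $\overline{v_B}(t)$ and $\overline{i_L}(t)$ are proper rational functions all of whose poles lie in the open left half-plane except for a single, simple pole at $s=0$; convergence to a constant value then follows from a partial-fraction argument. Starting from~\eqref{eq:bipartite_eqtNN}, eliminating $\overline{V_B}$ gives $\big(I+L(s)\mathcal{A}^{T}B(s)\mathcal{A}\big)\overline{I_L}(s)=L(s)\mathcal{A}^{T}\overline{W_V}(s)+\overline{W_L}(s)$ together with $\overline{V_B}(s)=-B(s)\mathcal{A}\,\overline{I_L}(s)+\overline{W_V}(s)$. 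So I would need to (i) locate the zeros of the return difference $\Delta(s):=\det\big(I+L(s)\mathcal{A}^{T}B(s)\mathcal{A}\big)$ in $\overline{\C}_{+}$ and (ii) control $\big(I+L(s)\mathcal{A}^{T}B(s)\mathcal{A}\big)^{-1}$ and the right-hand side near $s=0$.

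For $\omega\in\overline{\R}_{+}\setminus\{0\}$, and hence by analyticity for the whole open right half-plane, the argument is the graph-separation/homotopy argument already used in the proof of Theorem~\ref{thm:Main_result_1}: hypothesis~(a) of Theorem~\ref{thm:Main_result_2} is exactly the pointwise-in-frequency hypothesis used there for $\omega\neq0$, so it certifies that $\Delta(s)$ is non-vanishing on $\overline{\C}_{+}\setminus\{0\}$ with the winding number prescribed by the open-loop poles. The only new feature is that, under Assumption~\ref{assump:line_general_dynamic}, $L(s)$ has (simple) poles on the imaginary axis, at the origin; I would therefore run the Nyquist-type argument on a contour indented into the right half-plane around $s=0$. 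The indentation is admissible because $L(s)\to\infty$ as $s\to0$, so $I+L(s)\mathcal{A}^{T}B(s)\mathcal{A}$ is certainly nonsingular on a small semicircle there; and since the indented contour leaves the origin poles of $L$ outside, the count of enclosed open-loop poles is unchanged, so one concludes that the closed loop has no poles in $\overline{\C}_{+}\setminus\{0\}$.

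The behaviour at $s=0$ is where hypothesis~(b) enters, via the positive-realness in Assumption~\ref{assump:line_general_dynamic}. Write the partial-fraction expansion $L(s)=\tfrac1s K+\hat L(s)$ with $\hat L\in\RHi^{n_\ell\times n_\ell}$ and $K=\oplus_{k=1}^{n_\ell}\kappa_k$, where each residue $\kappa_k=\lim_{s\to0}sL_k(s)>0$ because $L_k$ is positive real with a pole at the origin; thus $K\succ0$ and $I+L(s)\mathcal{A}^{T}B(s)\mathcal{A}=\tfrac1s M(s)$ with $M(s)=sI+K\mathcal{A}^{T}B(s)\mathcal{A}+s\hat L(s)\mathcal{A}^{T}B(s)\mathcal{A}$, so that $M(0)=K\mathcal{A}^{T}B(0)\mathcal{A}$. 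Condition~\eqref{eq:main_2_eqt_2} forces $B(0)\succeq0$, hence $M(0)$ is (similar to) a positive semidefinite matrix whose kernel contains $\ker\mathcal{A}$ (and equals it, of dimension $n_c:=n_\ell-n_b+1$, when $B(0)\succ0$). The key observation is that, writing $M(s)=M(0)+sQ(s)$ with $Q$ analytic at $0$, the compression of $Q(0)$ to $\ker\mathcal{A}$ is the identity, because every term of $Q(0)$ other than $I$ carries $\mathcal{A}$ on the right and therefore annihilates $\ker\mathcal{A}$. A Schur-complement computation in a basis adapted to the (direct, since $K\succ0$) splitting $\R^{n_\ell}=\ker M(0)\oplus\mathrm{Im}\,M(0)$ then shows that $\det M(s)$ has a zero of order exactly $\dim\ker M(0)$ at $s=0$, and that $M(s)^{-1}$ has only a \emph{simple} pole there; consequently $\big(I+L(s)\mathcal{A}^{T}B(s)\mathcal{A}\big)^{-1}=sM(s)^{-1}$ is analytic at $s=0$. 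On the right-hand side, $\overline{W_V}=\Theta_B(s)\overline{x_B}(0)$ is analytic at $0$, while the factor $\tfrac1s K$ in $L(s)$ makes $L(s)\mathcal{A}^{T}\overline{W_V}(s)$ have at most a simple pole at $0$, and $\overline{W_L}=\Theta_L(s)\overline{x_L}(0)$ has a simple pole at $0$ coming from the origin pole of $\Theta_L$ in~\eqref{eq:Theta_L}. Multiplying by the analytic-at-$0$ factor $sM(s)^{-1}$ shows $\overline{I_L}(s)$ has at most a simple pole at $s=0$, and then so does $\overline{V_B}(s)=-B(s)\mathcal{A}\,\overline{I_L}(s)+\overline{W_V}(s)$. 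Combined with the previous paragraph and well-posedness at $\omega=\infty$ (covered by Theorem~\ref{thm:Main_result_1} applied at $\omega=\infty$), both $\overline{V_B}$ and $\overline{I_L}$ are proper rational with a single, simple pole at the origin, so $\overline{v_B}(t)$ and $\overline{i_L}(t)$ converge to the respective residues at $s=0$; in fact one reads off $\overline{v_B}(t)\to0$ and $\overline{i_L}(t)\to$ a circulating current lying in $\ker\mathcal{A}$.

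The hard part will be the careful bookkeeping around $s=0$: proving that the zero of $\det M(s)$ there is \emph{exactly} of order $\dim\ker M(0)$ --- equivalently, that $\overline{I_L}(s)$ acquires no pole of order $\ge2$ at the origin --- which is precisely where the positivity of $B(0)$ and the kernel structure of the incidence matrix are used (and where the degenerate case $B_j(0)=0$, still allowed by~\eqref{eq:main_2_eqt_2}, needs a little extra attention), and then splicing this local analysis together with the right-half-plane analysis and the stabilizability/detectability conditions of Assumptions~\ref{assump:stab_bus} and~\ref{assump:line_general_dynamic} into a clean statement that the closed loop has no hidden modes and no $\overline{\C}_{+}$ poles other than the simple one at the origin. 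An alternative route would be a perturbation argument --- replace each origin pole $\kappa_k/s$ by $\kappa_k/(s+\varepsilon)$ so that $L^{\varepsilon}\in\RHi$, observe that~\eqref{eq:main_2_eqt_2} is exactly Statement~1 of Theorem~\ref{thm:Main_result_1} evaluated at $\omega=0$ with $\gamma_{j2}=0$, so the perturbed microgrid satisfies Theorem~\ref{thm:Main_result_1} for all small $\varepsilon>0$, and then pass to the limit $\varepsilon\to0$ --- but making the limit uniform in $\omega$ near $0$ and near $\infty$ appears at least as delicate as the direct argument.
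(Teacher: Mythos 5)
Your overall architecture matches the paper's: an indented Nyquist contour combined with the homotopy machinery of Theorem~\ref{thm:Main_result_1} to exclude closed-loop poles in $\overline{\C}_+\setminus\{0\}$, and a separate local analysis at $s=0$ showing the closed loop has at most a simple pole there. Where you genuinely differ is at the origin: the paper factors $L(s)=s^{-1}H(s)$, works with the four initial-condition-to-output blocks $\chi_{ij}(s)$, and disposes of the off-diagonal blocks via the pseudo-inverse limit $\lim_{s\to0}(sI+\Lambda^*\Lambda)^{-1}\Lambda^*$ from~\cite{CaM:09}, whereas you factor the return difference as $s^{-1}M(s)$ and argue directly that $\det M(s)$ vanishes to order exactly $\dim\ker M(0)$ using the fact that the compression of $Q(0)$ to $\ker\mathcal{A}$ is the identity. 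Your route is more self-contained and additionally identifies the limit of $\overline{i_L}(t)$ as a circulating current in $\ker\mathcal{A}$, which the paper leaves implicit; but it relies on $\ker M(0)=\ker\mathcal{A}$, i.e. on strict positivity $B_j(\j0)>0$, so the degenerate case $B_j(\j0)=0$ that you flag (formally permitted by~\eqref{eq:main_2_eqt_2}) is not a side issue — the paper simply reads~\eqref{eq:main_2_eqt_2} as $B_j(\j0)>0$ (Remark~\ref{rem:passivity_lowFreq}), and you must either do the same or treat that case separately.

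The one genuine gap is in your handling of the indentation. You justify it by noting that $L(s)\to\infty$ makes $I+L(s)\mathcal{A}^TB(s)\mathcal{A}$ nonsingular on the small semicircle and then conclude the pole count is unchanged; but the multivariable Nyquist criterion requires controlling the net encirclements of $-1$ by the eigenloci along the \emph{entire} closed contour, including the indentation, and there the nonzero eigenloci blow up like $\tau\lambda_i\big(K\mathcal{A}^TB(0)\mathcal{A}\big)/s$ and sweep an arc of $\pi$ radians at infinity as $s$ traverses the semicircle. Whether these arcs close through the right half-plane (harmless) or through the left half-plane (where they can pass through or encircle $-1$ for some $\tau$ in the homotopy) is determined precisely by the positivity of those residues, i.e. by $B_j(\j0)>0$ together with $H(\j0)>0$ from positive realness. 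This is Step~3 of Part~2 of the paper's proof and is exactly where condition (b) enters the winding-number argument, not only the pole-order computation at $s=0$. As written, ``nonsingular on the semicircle'' is necessary but not sufficient: the winding contribution of the indentation could otherwise be nonzero, and your conclusion that the closed loop has no poles in $\overline{\C}_+\setminus\{0\}$ would not follow. Repairing this with the eigenloci argument above makes your proposal essentially complete.
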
}

~	\begin{proof} 	See Appendix~\ref{app:Thm2_proof}.	\\	\end{proof}

\begin{remark}[Passivity at low frequencies]\label{rem:passivity_lowFreq}
\il{The passivity condition~\eqref{eq:main_2_eqt_2} \il{reduces to} 
requiring that \mbox{$B_j(\j0)>0$}. This arises from the infinite gain of $L_k(\jw)$ at $\omega=0$. It should  be noted that this condition is also necessary for many classes of  \ilff{systems;} e.g. consider the \il{simple example of the}  negative feedback interconnection of a stable first order system $\dfrac{k_1}{s+\lambda}$ with  $\lambda\kkl{>}0$, \iclf{and}  an integrator $\dfrac{1}{s}$. In this case  \eqref{eq:main_2_eqt_2} reduces to $k_1\geq0$ and it is easy to see that the feedback interconnection is unstable if \ilff{$k_1<0$}.}
\end{remark}

\section{Examples}\label{sec:num_exp}		
To  demonstrate the applicability  of our results, we consider  two  generic examples  of DC~microgrids where each bus contains a controlled DC-DC converter connected to loads, see   for instance~\cite{BNDL:17,CRMFM:18}.

{	In particular, we will  consider configurations with parameters that have been chosen in the literature in a centralized way  to have good performance. We will then investigate whether stability can also be verified via the decentralized conditions derived in the paper. The significance of the latter is that they can be used as a decentralized design protocol for the network; \ie the choice of frequency ranges where the different statements in Theorem~\ref{thm:Main_result_1} are applied and the corresponding multipliers ($\Pi_i^{B_j}$ and $\Pi_i^{G_j}$), can be used as  local design rules for the converters through which stability of the network can be guaranteed.}

In our analysis we will start {with} Statement~1, \ie 
the bus/line decomposition, and consider first the more commonly used bus 
passivity condition ${ 	B_j(\jw)\in \QC(\Pi^{B_j}_1,\epsilon_{B_j}(\w))}.$  If this condition is not satisfied for all/some frequencies, we test the small-gain condition ${ 	B_j(\jw)\in \QC(\Pi^{B_j}_2(\jw),\epsilon_{B_j}(\w))}$ at those frequencies. If the previous condition is also  not satisfied at all/some of those frequencies, we {make use of} 
Statement~2    and we test if ${ 	G_j(\jw)\in \QC(\Pi^{G_j}_1,\epsilon_{G_j}(\w))}$.  If the previous condition is still not satisfied at those frequencies,  we test  ${ 	G_j(\jw)\in \QC(\Pi^{G_j}_2,\epsilon_{G_j}(\w))}$ and we continue,  {if necessary, by testing whether ${ G_j(\jw)\in \QC(\Pi^{G_j}_3(\jw),\epsilon_{G_j}(\w))}$ can be satisfied for some choice of multipliers $\Pi^{G_j}_3(\jw)$.}

The examples we are considering deal with    two   common   situations.
\begin{itemize}
\item Microgrid with resistive loads
{where adjusting the converter control parameters to passivate the bus dynamics  can result  in a significant} voltage deviation from the nominal value.
\item Microgrid with ZIP loads dominated by their  constant power components which {can pose stability challenges.} 
\end{itemize}

Note that the numerical values used in these two examples are taken from~\cite{LSGVHW:15,ZSAM:16}.

\begin{figure}[t]
\centering
\includegraphics[width=0.6\linewidth]{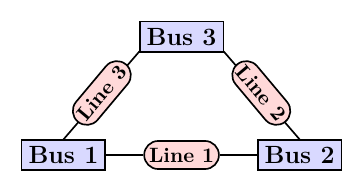}
\caption{Simplified representation of the microgrid considered.}
\label{fig:exp3_diagram}
\end{figure}

\klRev{The microgrid under consideration is composed of three buses as illustrated
in Fig.~\ref{fig:exp3_diagram}, where each bus is composed of a controlled buck converter connected
to a load. Under the common assumptions that \icr{an average model for the DC-DC converter dynamics can be used}
(resulting from  Assumption~\ref{assump:converter})
and that switching \icr{losses} may be ignored, the dynamics of each bus are given by}	
\begin{equation}
\left\lbrace
\begin{matrix}
&\diff{i_{j}(t)}{t} =\dfrac{1}{L_j}\left(-v_{B_j}(t)-R_{j} i_{j}(t) + u_j(t) \right)\\
&\diff{v_{B_j}(t)}{t}=\dfrac{1}{C_{j}}\left(i_{j}(t)- i_{\text{Load}_j}(t) +i_{B_j}(t)\right)\\
\end{matrix}\right. 
\label{eq:control_bubk_model}
\end{equation}			
where 	$i_{j}(t)$ is the bus internal current (the  inductor current),   $i_{B_j}(t)$ is the bus injection current from the network,  $v_{B_j}(t)$ is the bus output voltage and $u_j(t)$ is the control input of the buck converter.  $R_{j}$, $L_j$ and  $C_j$    are  bus filter resistance, inductance and capacitance  respectively. 
\klRev{ The control input, $u_{j}(t)$, is determined by the local controller at the bus and is actuated via the duty ratio of the DC-DC converter,
usually with the goal of regulating the output voltage and/or achieving load
sharing between the various DC-DC converters in the network. In this example,
it is given as the output of    a double~PI controller with the following model, {where} $x_{K_j}(t)$ {is} the controller state vector. }
\begin{equation}
\label{eq:control_struct}
\left\lbrace
\begin{matrix}
&\diff{x_{K_j}(t)}{t}=\hspace{-0.05cm}
\begin{pmatrix} 0 &0\\ K_{I_{v_{j}}}  &0
\end{pmatrix}x_{K_j}(t)+\dots\\& \hspace{1.5cm} \dots+\begin{pmatrix}1&0\\K_{P_{v_{j}}} &-1\end{pmatrix}\hspace{-0.15cm} \begin{pmatrix}e_{v_j}(t)\\i_{j}(t)\end{pmatrix} \\
&u_{j}(t)=  \begin{pmatrix} K_{P_{i_{j}}}\hspace{-0.15cm}K_{I_{v_{j}}} &~~ K_{I_{i_{j}}}  \end{pmatrix}x_{K_j}(t) +\cdots\\&~~~~~~~~\cdots+\begin{pmatrix} K_{P_{i_{j}}}\hspace{-0.15cm}K_{P_{v_{j}}} & -K_{P_{i_{j}}}  \end{pmatrix} \begin{pmatrix}e_{v_j}(t)\\i_{j}(t)\end{pmatrix}
\end{matrix}\right.
\end{equation}
{The} gains $K_{P_{v_{j}}}$ and  $K_{I_{v_{j}}}$ are the {proportional   and the
integral 
gains, respectively, } 
of the voltage PI controller~(outer controller) while  $K_{P_{i_{j}}} $ and $K_{I_{i_{j}}} $ are those of the current PI controller~(inner controller). The signal
$e_{v_j}(t)$ is the voltage tracking error given by $e_{v_j}(t)= v_{r_j}(t) - v_{B_j}(t)$ where $v_{r_j}(t)$ is  the desired bus output voltage which is adjusted according to  $i_{B_j}(t)$  such that  $v_{r_j}(t)=v_{r_\text{nom}}+R_{\text{droop}_j}i_{B_j}(t)  $
with $v_{r_\text{nom}}$   the nominal voltage and $R_{\text{droop}_j}$  the droop coefficient.

\klRev{Note that the state vector $x_{B_j}(t)$  in~\eqref{eq:bus_dynmics}  is in this case  $x_{B_j}(t)=\begin{pmatrix}
i_j(t) & v_{B_j}(t)& x_{K_j}^{\top}(t)
\end{pmatrix}^{\top}$ while  $f_{B_j}(x_{B_j}(t),i_{B_j}(t))$  is obtained by considering the right-hand side of  the differential equations in~\eqref{eq:control_bubk_model}-\eqref{eq:control_struct} after replacing $u_j(t)$, $e_{v_j}(t)$, $v_{r_j}(t)$ and $ i_{\text{Load}_j}(t)$ with their expressions given above.}		
Note that the current $ i_{\text{Load}_j}(t)$ in~\eqref{eq:control_bubk_model} is the load current and its expression depends on the load type as it will be shown in the two cases below.

\subsection{Microgrid with  resistive loads }\label{sec:expRes}
In this case,  each bus is connected to {a resistive} load  $R_{\text{Load}_j}>0$ and  $i_{\text{Load}_j}(t)$ is given by  $$i_{\text{Load}_j}(t)=R^{-1}_{\text{Load}_j}v_{B_j}(t). $$		
To investigate the stability of this microgrid, we start with the usual passivity argument  of Statement~1  corresponding to ${ 	B_j(\jw)\in \QC(\Pi^{B_j}_1,\epsilon_{B_j}(\w))}$. The analysis  reveals  that the different buses  are not passive over {all  frequencies} especially   {at high frequencies} as it can be seen in Fig.~\ref{fig:Exp1_pass}.

A common approach to enhance  bus passivity is to {increase    the droop coefficients~$R_{\text{droop}_j}$ until we  have}  ${ 	B_j(\jw)\in \QC(\Pi^{B_j}_1,\epsilon_{B_j}(\w))}$   at all   frequencies.  However, the consequence  will be an important output voltage deviation from the nominal  value $v_{r_\text{nom}}$ which makes this approach non practical for the {operation of the microgrid.}

To go beyond the passivity   \ilff{condition}   using Statement~1,   we investigate if   the small-gain condition ${ 	B_j(\jw)\in \QC(\Pi^{B_j}_2(\jw),\epsilon_{B_j}(\w))}$  is satisfied at least {at high frequencies}. The results  presented in  Fig.~\ref{fig:Exp1_sg} affirm that the 
{latter} 
condition  is satisfied {at high frequencies where passivity failed}. Hence, the microgrid of Fig.~\ref{fig:exp3_diagram} with resistive loads is stable    since Statement~1 is always satisfied  at each frequency~${\omega \in \overline{\R}_+}$ as it can be seen in  Fig.~\ref{fig:Exp1_S1}.

{Therefore,} a scalable control protocol when considering such microgrids {in a general network topology can be to require the dynamics at each bus to satisfy} 
\begin{itemize}
\item {${ 	B_j(\jw)\in \QC(\Pi^{B_j}_1,\epsilon_{B_j}(\w))}$
at a prescribed low frequency range $\omega<\omega_c$.}
\item {${ 	B_j(\jw)\in \QC(\Pi^{B_j}_2(\jw),\epsilon_{B_j}(\w))}$  at  higher      frequencies $\omega\geq\omega_c$.}
\end{itemize}

\begin{figure*}[!t]
\centering \hspace{-0.0cm}
\begin{subfigure}{.625\columnwidth}
\includegraphics[width=1\columnwidth]{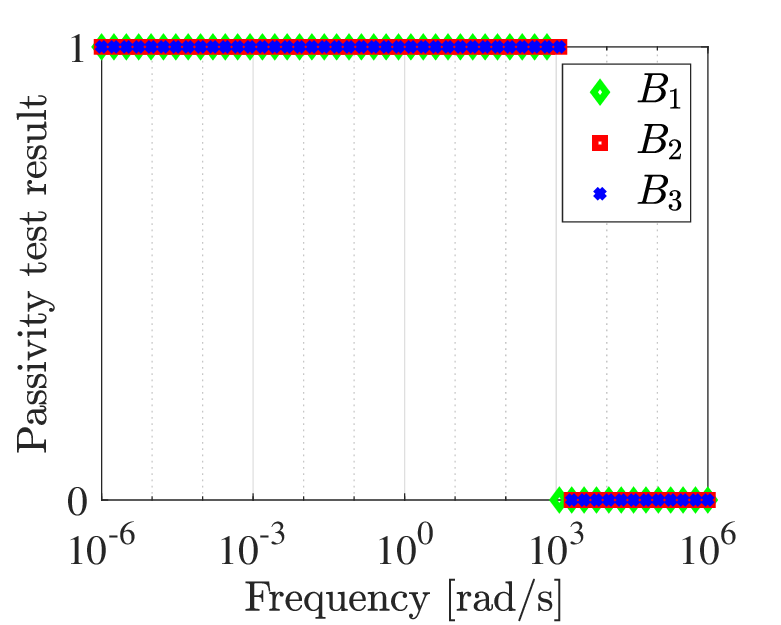}%
\caption{}%
\label{fig:Exp1_pass}%
\end{subfigure} \hspace{-0.0cm}
\begin{subfigure}{.625\columnwidth}
\includegraphics[width=1\columnwidth]{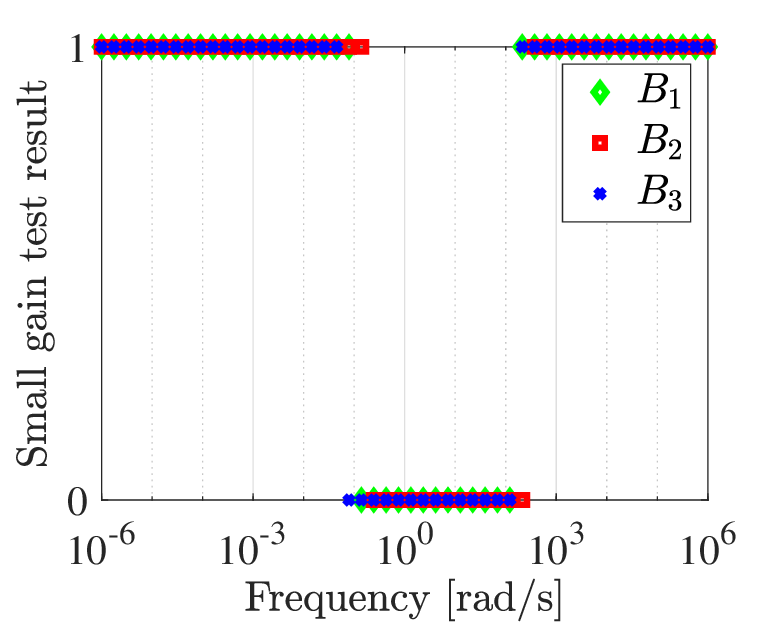}%
\caption{}%
\label{fig:Exp1_sg}%
\end{subfigure} \hspace{-0.0cm}
\begin{subfigure}{.625\columnwidth}
\includegraphics[width=1\columnwidth]{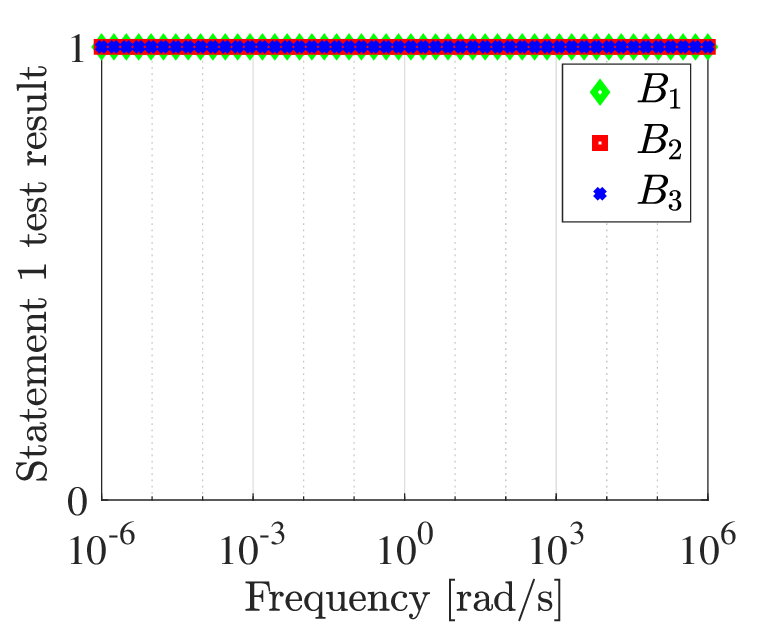}%
\caption{}%
\label{fig:Exp1_S1}%
\end{subfigure}%
\caption{Stability assessment  results of the microgrid of Fig.~\ref{fig:exp3_diagram} with resistive loads using: (a)~passivity, (b)~small gain and (c)~Statement~1 {(passivity and small gain)}. Note that at \ilc{each} 
frequency,  a value at 0 on the vertical axis means that the  considered test has   failed while 1 means it has passed.}
\label{fig:E1_results}
\end{figure*}

\begin{figure*}[!b]
\centering \hspace{-0.0cm}
\begin{subfigure}{.652\columnwidth}
\includegraphics[width=1\columnwidth]{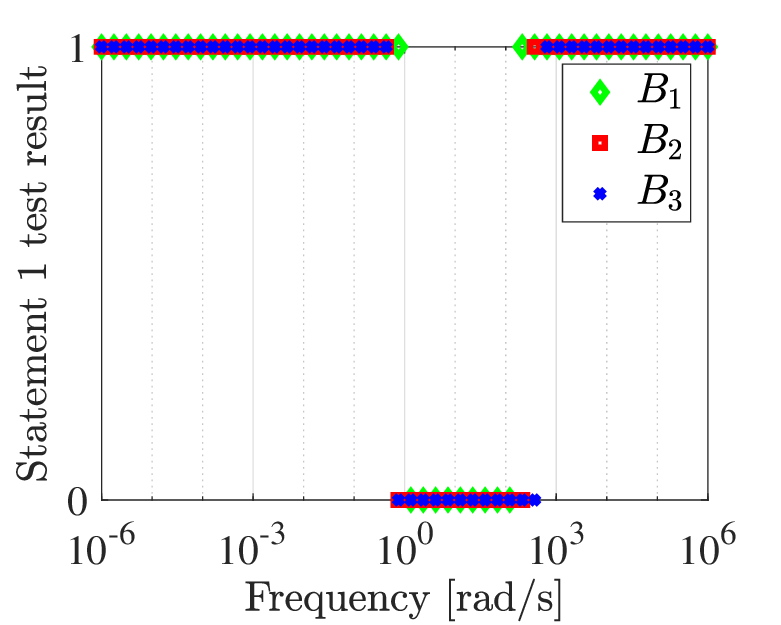}%
\caption{}%
\label{fig:Exp2_S1}%
\end{subfigure} \hspace{0cm}
\begin{subfigure}{.625\columnwidth}
\includegraphics[width=1\columnwidth]{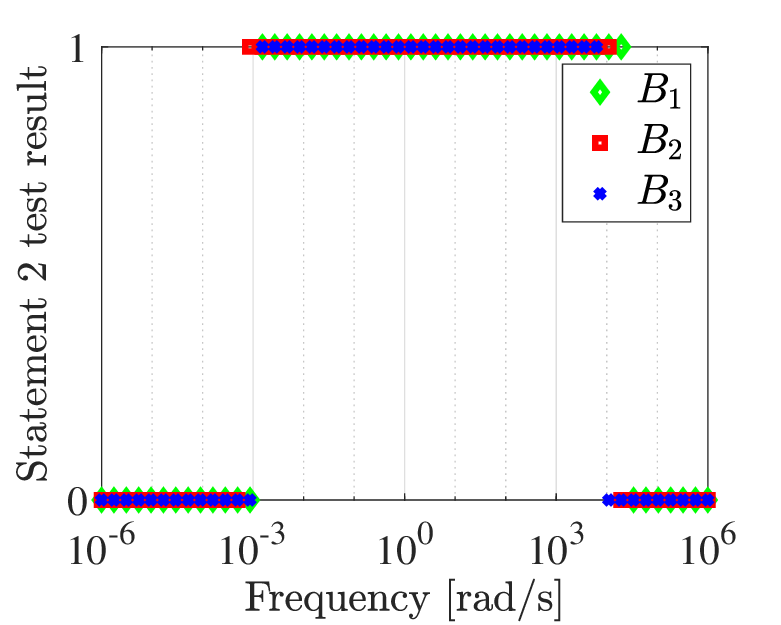}%
\caption{}%
\label{fig:Exp2_S2}%
\end{subfigure} \hspace{0cm}
\begin{subfigure}{.625\columnwidth}
\includegraphics[width=1\columnwidth]{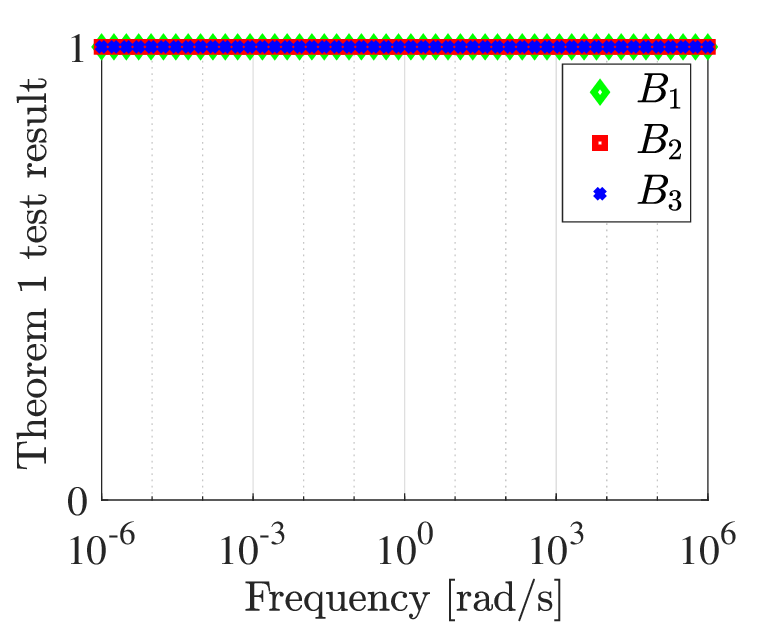}%
\caption{}%
\label{fig:Exp2_Thm1}%
\end{subfigure}%
\caption{Stability assessment  results of the microgrid of Fig.~\ref{fig:exp3_diagram} with ZIP loads where  $P_{\text{Load}_j}v_{r_j}^{-1}(t)> Z^{-1}_{\text{Load}_j}v_{r_j}(t)$  using: (a)~Statement~1, (b)~Statement~2 and (c)~Theorem~\ref{thm:Main_result_1}. Again note that  {at each} frequency, {a value} at 0 on the vertical axis means that the  considered test has   failed while 1 means it has passed.}
\label{fig:E2_results}
\end{figure*}

\subsection{Microgrid with  ZIP {loads}}\label{sec:expZIP}

We consider now {the case where} the loads are not just  resistive and they also  contain constant current  and constant power elements.
The load current $i_{\text{Load}_j}(t)$ becomes
$$ 	i_{\text{Load}_j}(t)=\overline{i}_{\text{Load}_j}+Z^{-1}_{\text{Load}_j}v_{B_j}(t)+P_{\text{Load}_j}v^{-1}_{B_j}(t) $$
where $\overline{i}_{\text{Load}_j}$,  $Z_{\text{Load}_j}=R_{\text{Load}_j}>0$ and $P_{\text{Load}_j}>0$ are \ilc{the current, impedance and power of the}  constant current load, the  constant impedance load and the constant power load  respectively.

The presence of constant power loads has a destabilizing effect on {the} bus dynamics since  they behave as negative {resistance} elements.
To guarantee bus stability using {a passivity} argument, the {effect of the constant  impedance loads is {larger  than} that of the  constant power loads~\cite{CLKKS:19,NSMMFT:20},} that is  $$ Z^{-1}_{\text{Load}_j}v_{r_j}(t)>P_{\text{Load}_j}v_{r_j}^{-1}(t). $$		
If the previous condition is satisfied, then it is possible to certify the stability of the microgrid with ZIP loads using Statement~1 in a similar way to  the case of resistive loads presented earlier.
Nevertheless, in many practical {cases,}  the constant power {loads}  can be larger than the constant impedance {loads}.  In this case,  Statement~1 alone will not be able to certify {microgrid} stability    as it can be seen in Fig.~\ref{fig:Exp2_S1}. {Note that in contrast to the case of resistive loads,
the passivity condition does {not hold} at low frequencies
while the small-gain condition holds.} Note also that none of these conditions {hold} in medium frequencies, see Fig.~\ref{fig:Exp2_S1}.

Statement~2 allows to go beyond   passivity and small-gain conditions of the conventional bus/line microgrid decomposition. In fact, the analysis reveals that by  considering the small-gain condition in  the new decomposition   ${ 	G_j(\jw)\in \QC(\Pi^{G_j}_2,\epsilon_{G_j}(\w))}$, Statement~2 is satisfied  at  medium frequencies where Statement~1 failed,  see Fig.~\ref{fig:Exp2_S2}.
Therefore,  we have     Statement~1 and/or  Statement~2 satisfied  at  each frequency~${\omega \in \overline{\R}_+}$ as it can be seen in   Fig.~\ref{fig:Exp2_Thm1},  and hence  the   microgrid of Fig.~\ref{fig:exp3_diagram} with ZIP loads {considered} (dominated by
{constant power loads}) is stable.

To conclude, when considering such microgrids {in a} general network topology, a scalable control protocol  {is to design} 
controllers able to satisfy		
\begin{itemize}
\item ${ 	B_j(\jw)\in \QC(\Pi^{B_j}_2(\jw),\epsilon_{B_j}(\w))}$   {in a prescribed low frequency range}.
\item ${ 	G_j(\jw)\in \QC(\Pi^{G_j}_2,\epsilon_{G_j}(\w))}$  {in a prescribed medium frequency range}. 
\item ${ 	B_j(\jw)\in \QC(\Pi^{B_j}_1,\epsilon_{B_j}(\w))}$  at  high frequencies.
\end{itemize}

\klRev{\begin{remark}[Post-analysis verification]
The \ir{analysis presented in} Sections~\ref{sec:expRes} and~\ref{sec:expZIP}
\ir{has been carried out for a} predefined set of individual frequencies.
\irr{This analysis provides the frequency ranges and the corresponding conditions, which define the protocol that provides stability guarantees. Once this step is completed,} 
we have used the generalized KYP lemma~\cite{iwasaki2005generalized} as a post-analysis verification tool to effectively verify that  the  results obtained  hold over the \ir{whole} prescribed frequency ranges.
\end{remark}}

\section{Conclusions}\label{sec:concl}
We have derived decentralized stability conditions for
DC~microgrids. Our analysis takes into account {the line}
dynamics and {also allows higher order models} 
for the DC-DC converters at each bus. By exploiting various decompositions
of the network, we have derived {multiple} decentralized input-output stability
conditions    that {also} allow to exploit the coupling
of each bus with neighboring lines.  We have used appropriate 
homotopy arguments   to combine these
conditions {pointwise over} frequency thus reducing the conservatism {in the} analysis. The applicability of the obtained results has been illustrated through  {examples.}

\appendix

\section{Proof of Theorem~\ref{thm:Main_result_1}}	\label{app:Thm1_proof}

The following lemma is {used in the proof of} 
Theorem~\ref{thm:Main_result_1}.
It gives  sufficient conditions {that} ensure that   the  point -1 is not  included in the   {eigenloci of the} return-ratio of   {the negative feedback interconnection of two  stable linear systems.}

\begin{lemma} Consider a negative feedback interconnection of   $S_1\in \RHi^{m\times n}$ and $S_2 \in \RHi^{n \times m}$. {The} \mbox{point -1 }  is not included in the eigenloci of the  return-ratio  $S_2(s)S_1(s)$ when evaluated on the imaginary axis $\jR$, that is  $-1 \notin \lambda_i\left(  S_1(\jw)S_2(\jw)\right)$ with $\omega \in \overline{\R} $,  if
there {exists} a scalar~$\epsilon^\Pi(w)>0$ and a  matrix  $\Pi=\begin{pmatrix}
\Pi_{11} & \Pi_{12}\\ \Pi_{12}^* & \Pi_{22}
\end{pmatrix}$, where $\Pi_{11}:\j\overline{\R}  \rightarrow \C^{m \times m }$, ${\Pi_{12}}:\j\overline{\R}  \rightarrow \C^{m \times n }$ and $\Pi_{22}:\j\overline{\R}  \rightarrow \C^{n \times n }$   such that for every $\omega \in \overline{\R}_+$,  the  following  conditions hold	
\begin{equation}
S_1(\jw)\in \QC\left(\Pi(\jw),\epsilon^\Pi(\w) \right) 
\label{eq:Dis_QC_stab_C1}
\end{equation}		
and
\begin{equation}
S_2(\jw)\in \QCinv\left( \Pi(\jw),0  \right). 
\label{eq:Dis_QC_stab_C2}
\end{equation}	
\label{lem:graph_sep_lemma}
\end{lemma}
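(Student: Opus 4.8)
The plan is to prove the lemma by a pointwise graph separation argument, along the lines of \cite{Les:11}, arguing by contradiction at the frequency where separation would fail. First I would observe that, since $S_1$ and $S_2$ are real rational, the eigenvalues of $S_1(-\j\omega)S_2(-\j\omega)$ are the complex conjugates of those of $S_1(\j\omega)S_2(\j\omega)$; as $-1$ is real it therefore suffices to exclude it from the eigenloci for $\omega\in\overline{\R}_+$, which is precisely the range over which the hypotheses are imposed (the value $\omega=\infty$ being handled by evaluating the proper transfer functions and the multiplier at their boundary values, which is why $\Pi$ is taken to be a function on all of $\j\overline{\R}$). Suppose then, for contradiction, that $-1\in\lambda_i\big(S_1(\j\omega_0)S_2(\j\omega_0)\big)$ for some $\omega_0\in\overline{\R}_+$; then there is a nonzero $v\in\C^m$ with $S_1(\j\omega_0)S_2(\j\omega_0)v=-v$.

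Next I would set $u:=S_2(\j\omega_0)v\in\C^n$, so that $S_1(\j\omega_0)u=-v$, and substitute the corresponding vectors into the two quadratic constraints. Observe that
\[
\begin{pmatrix} S_1(\j\omega_0) \\ I \end{pmatrix} u = \begin{pmatrix} -v \\ u \end{pmatrix}, \qquad \begin{pmatrix} I \\ -S_2(\j\omega_0) \end{pmatrix} v = \begin{pmatrix} v \\ -u \end{pmatrix},
\]
so the graph vector of $S_1$ and the inverse-graph vector of $S_2$ differ only by an overall sign, and hence any Hermitian form takes the same value on the two. Multiplying the matrix inequality \eqref{eq:Dis_QC_stab_C1} (the definition of $S_1(\j\omega_0)\in\QC(\Pi(\j\omega_0),\epsilon^\Pi(\omega_0))$) on the left by $u^*$ and on the right by $u$, and using $S_1(\j\omega_0)u=-v$, yields
\[
\begin{pmatrix} -v \\ u \end{pmatrix}^*\Pi(\j\omega_0)\begin{pmatrix} -v \\ u \end{pmatrix} \;\geq\; \epsilon^\Pi(\omega_0)\, u^* S_1(\j\omega_0)^* S_1(\j\omega_0) u \;=\; \epsilon^\Pi(\omega_0)\,\|v\|^2 \;>\;0 ,
\]
where strict positivity uses $v\neq 0$ and $\epsilon^\Pi(\omega_0)>0$. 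On the other hand, multiplying \eqref{eq:Dis_QC_stab_C2} (the definition of $S_2(\j\omega_0)\in\QCinv(\Pi(\j\omega_0),0)$) on the left by $v^*$ and on the right by $v$ yields
\[
\begin{pmatrix} v \\ -u \end{pmatrix}^*\Pi(\j\omega_0)\begin{pmatrix} v \\ -u \end{pmatrix} \;\leq\; 0 .
\]
Since the two column vectors coincide up to sign, the left-hand sides of the last two displays are equal, so $0<\epsilon^\Pi(\omega_0)\|v\|^2\leq 0$, a contradiction. Hence no such $v$ exists, and $-1\notin\lambda_i\big(S_1(\j\omega)S_2(\j\omega)\big)$ for every $\omega\in\overline{\R}$.

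I do not expect a genuine analytical obstacle here: the argument amounts to a single substitution into the two quadratic constraints, with the slack $\epsilon^\Pi(\omega)>0$ in \eqref{eq:Dis_QC_stab_C1} being exactly what turns the two opposing inequalities into a strict contradiction. The only delicate points are bookkeeping ones: matching the sign convention of the negative feedback interconnection to the graph parametrization $\begin{pmatrix} S_1 \\ I \end{pmatrix}$ and the inverse-graph parametrization $\begin{pmatrix} I \\ -S_2 \end{pmatrix}$, so that the two test vectors line up, and treating $\omega=\infty$ consistently (accommodated by $\Pi$ being defined on $\j\overline{\R}$ and by properness of $S_1,S_2$). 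This lemma is then the building block for Theorem~\ref{thm:Main_result_1}: at each frequency one constructs a block-diagonal multiplier $\Pi(\j\omega)$ from the per-bus data ($\Pi^{B_j}$, Statement~1) or from the per-subsystem data ($\Pi^{G_j}$, Statement~2), checks that the complementary systems (the lines, respectively the $L_k$ blocks) satisfy the matching $\QCinv$ constraint, and then combines the two statements pointwise over $\omega$ via the homotopy argument.
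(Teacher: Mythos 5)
Your proof is correct and is essentially the paper's own argument: the paper likewise assumes $-1$ is an eigenvalue at some frequency, sets $w=S_1 z$, $z=-S_2 w$ (your $v$ and $-u$), substitutes the resulting vector into the two quadratic constraints, and derives $0<\epsilon^\Pi(\omega)\|w\|^2\le 0$, extending to all $\omega\in\overline{\R}$ by conjugate symmetry of the eigenloci. No substantive differences.
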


\begin{proof}
Suppose that   $-1 \in \lambda_i\left(  S_1(\jw)S_2(\jw)\right)$ which means that    $\left(I+  S_1(\jw)S_2(\jw)\right)$ is not  invertible,    then  there exists a \ir{non zero}  vector $w(\jw)  \in\mathbb{C}^{m}$ different from zero  such that
$(I+   S_1(\jw)S_2(\jw))w(\jw)=0.$   Letting~$z(\jw)=-S_2(\jw)w(\jw)$, the previous equality becomes    $ w(\jw)- S_1(\jw) z(\jw)=0$ and we    obtain  $ w(\jw)= S_1(\jw) z(\jw)$ together with $z(\jw)=-S_2(\jw)w(\jw)$.		
Let $\sigma(\jw)=\begin{pmatrix}       w(\jw)^* ~~ z(\jw)^*  \end{pmatrix}^*$. 	After pre and post multiplying  the expanded forms (expanded as in~\eqref{def:QC}) of conditions~\eqref{eq:Dis_QC_stab_C1}  and~\eqref{eq:Dis_QC_stab_C2}   by $z(\jw) $ and $w(\jw) $     respectively, we~obtain $\sigma(\jw)^* \Pi(\jw) \sigma(\jw)
\geq  \kl{\epsilon^\Pi(\w)}~w(\jw)^*w(\jw)$ and  $\sigma(\jw)^* \Pi(\jw) \sigma(\jw)
\leq  0$
which is a contradiction. Therefore, if conditions  \eqref{eq:Dis_QC_stab_C1} and~\eqref{eq:Dis_QC_stab_C2} are satisfied  then  $-1 \notin \lambda_i\left(  S_1(\jw)S_2(\jw)\right)$ for all
$\omega \in \overline{\R}_+$. \ilc{Due to the symmetry of $\lambda_i(S_1(\jw)S_2(\jw)))$ about the real \ilff{axis,} 
$-1 \notin \lambda_i\left(  S_1(\jw)S_2(\jw)\right)$
also holds for all $\omega \in \overline{\R}$.} $\qed$
\end{proof}

The local  asymptotic stability of the equilibrium~\eqref{eq:equilibrium} of the DC microgrid~\eqref{eq:Line_dynamics}-\eqref{eq:bus_dynmics}       is investigated via the internal stability \cite[Def 5.2]{ZDG:95} of interconnection~\eqref{eq:bipartite_eqt} \cite[Lem 5.3]{ZDG:95}, \cite[Thm 3.7]{Kha:92}.
In particular, it is sufficient to show that the closed-loop  transfer functions  of interconnection~\eqref{eq:bipartite_eqt} (as defined in \cite[Lem 5.3]{ZDG:95})  have no poles in the closed right~half-plane~$\overline{\C}_+$. \kkl{Therefore, as $B(s)$ and $L(s)$ have no poles in~$\overline{\C}_+$, we only have to show that   $(I+Q_O(s))^{-1}$ has no poles in~$\overline{\C}_+$ with $Q_O(s)$ being the return-ratio of interconnection~\eqref{eq:bipartite_eqt}   given by 
\begin{equation}
Q_{{O}}(s)=B(s) \mathcal{A} L(s) \mathcal{A}^{\top}\label{eq:OCRR}.
\end{equation}}	Using ideas from~\cite{Les:11,LWL:20},  interconnection~\eqref{eq:bipartite_eqt} can \ilc{also} be represented equivalently as an interconnection of two systems  $G(s)$ and   $A$  such that 
\begin{equation}
\left\lbrace 	\begin{matrix}
& Y(s)&=& ~G(s) ~X(s)\\
& X(s)&=& -A~~ ~Y(s) 
\end{matrix}\right. 
\label{eq:GA}
\end{equation}
where  $G(s)=\oplus_{j=1}^{n_b}G_j(s)$ with 	\begin{equation}
G_j(s)= L(s) ~(a^r_{j})^{\top}~ B_j(s)~a^r_{j}~\vspace{-0.3cm}	
\label{eq:G_j}
\end{equation}
and
\begin{equation} A= M M^*	\vspace{-0.1cm}	 \label{eq:GA_data}
\end{equation}
with $M=\big( M_1^*~  \dots ~ M_{n_b}^*\big)^*$ where  \vspace{-0.1cm}
\begin{equation}
M_j=\oplus_{k=1}^{n_\ell} \Big( r^j_k\Big)~~~~~	r^j_k=\left\lbrace \begin{matrix}
1~~~~ ~\text{if $\mathcal{A}_{j k}\neq0$ }\\
0~~~~ ~\text{otherwise}~~~
\end{matrix}\right.~~
\label{eq:r_ik}
\end{equation}
\kkl{We define  	the return-ratio of interconnection~\eqref{eq:GA} as
\begin{equation}
Q_N(s)=G(s)MM^*.\label{eq:NCRR}
\end{equation}  }
The proof  of Theorem~\ref{thm:Main_result_1} uses  the quadratic graph separation arguments of Lemma~\ref{lem:graph_sep_lemma}, analogous to an IQC analysis, 
on representations~\eqref{eq:bipartite_eqt} and \eqref{eq:GA},  together with a homotopy argument, with stability deduced  	using the multivariable Nyquist criterion~\cite{DeY:80}. \ilc{In particular, a main feature of the proof is that it allows to combine pointwise over frequency various decentralized conditions associated with the two different network decompositions\footnote{\ilc{It should be noted that \ilff{since} two different network decompositions are considered in the same stability condition, a classical dissipativity or IQC analysis are not directly applicable.}}~\eqref{eq:bipartite_eqt}, \eqref{eq:GA}. 
}\\[2ex]
The proof consists of four parts. In {\bf Part~1},  we show that if Statement~1  is true  for  \ilff{each} ${\omega \in \overline{\R}_+}$, then  the \mbox{point -1 }  is not included in the eigenloci of~\kkl{$Q_{{O}}(s)$ in \eqref{eq:OCRR}}.	
In {\bf Part~2}, we show  in the same way that if Statement~2   is true for  \ilff{each} ${\omega \in \overline{\R}_+}$, then  the \mbox{point -1 }  is not included in the eigenloci of~\kkl{$Q_{{N}}(s)$  in~\eqref{eq:NCRR}}. In {\bf Part~3}, we show that if  the \mbox{point -1 }  is not included in the eigenloci of~\eqref{eq:NCRR}    then it is not included in the  eigenloci $Q_O(s)$ of~\eqref{eq:OCRR}. Finally, we deduce in {\bf Part~4} using homotopy arguments  that if  at each frequency either
Statement~1 or Statement~2 is true, then  interconnection~\eqref{eq:bipartite_eqt} is  stable.\\[1ex]
\textbf{$\bullet$ Part~1:} In this part, we
demonstrate   that having  Statement~1 satisfied  at each $\w \in \overline{\R}_+$ is sufficient
for {conditions}~\eqref{eq:Dis_QC_stab_C1}	and \eqref{eq:Dis_QC_stab_C2} {in} Lemma~\ref{lem:graph_sep_lemma} to~hold. This is done by exploiting {the following} two properties {of interconnection}~\eqref{eq:bipartite_eqt}.

\begin{enumerate}
\item {\it Passivity of the {power lines}:} 	
From~\eqref{eq:Line_dynamics}, it can be deduced that the line dynamics are passive which can be expressed in the frequency domain at each frequency $\w \in \overline{\R}_+$ as 	$ L_k(\jw)\in \QCinv\left(\Pi^{B_j}_1,0\right) $ 	with $\Pi^{B_j}_1$    given by~\eqref{eq:pi1Bj}. This is equivalent to   	\begin{equation}
\mathcal{A} L(\jw)  \mathcal{A}^{\top}\in \QCinv\left(\Pi^{B}_1,0\right)  
\label{eq:ALA_QC_Pi1}
\end{equation}
with $\Pi^{B}_1=I_{n_b} \otimes \Pi^{B_j}_1$.\\

\item {\it Bounding the {power line}   induced $\infty$-norm:}	
\khl{	The induced  $\infty$-norm of $\Xi(\jw)=\mathcal{A} L(\jw)  \mathcal{A}^{\top}$ is given by
$\max_j \left( | \sum_{k: k\in \mathcal{E}_j} L_k(\jw)| +  \sum_{k: k\in \mathcal{E}_j} \left| L_k(\jw)\right|\right)$.		 To have a bound on this  norm, we \ilf{consider a scaling by the matrix} $J_B(\jw)^{-1}=\left(\oplus_{j=1}^{n_b}{J_{B_j}}(\jw)\right)^{-1}$, \ilf{where $J_{B_j}(\jw)$ is} given by \eqref{eq:JBj}. \ilf{In particular, $\left\| J_B(\jw)^{-1} \Xi(\jw) \right\|_\infty$  is} given by 		$$\max_j  J_{B_j}(\jw)^{-1} \Big( | \sum_{k: k\in \mathcal{E}_j} L_k(\jw)| +  \sum_{k: k\in \mathcal{E}_j} | L_k(\jw)|\Big)  $$ which is equal to~1.
\ilf{Furthermore, we also have} 
$\left \| J_B(\jw)^{-1} \Xi(\jw)^*  \right\|_\infty=1$.
Now, let $F(\jw)=J_B(\jw)^{-1} \Xi(\jw)^* J_B(\jw)^{-1} \Xi(\jw)$. The spectral radius of  $F(\jw)$ can be bounded using the \ilf{induced} $\infty$-norm as follows
\begin{align}
\rho(F(\jw))&\leq  \left\|F(\jw) \right\|_\infty \nonumber\\
&\leq \left\|J_B(\jw)^{-1} \Xi(\jw)^*\right\|_\infty \left\|J_B(\jw)^{-1} \Xi(\jw)\right\|_\infty\nonumber\\
&\leq 1 \label{eq:rho_leq1}
\end{align}
\ilf{We now note that $F(\jw)$ has real eigenvalues as it is the product of a positive definite and a Hermitian matrix.}
Therefore, \ilf{since from \eqref{eq:rho_leq1}} $\rho(F(\jw))\leq 1$,  the eigenvalues of $J_B(\jw)^{-1} (\mathcal{A} L(\jw)  \mathcal{A}^{\top})^*J_B(\jw)^{-1} \mathcal{A} L(\jw)  \mathcal{A}^{\top}-I_{n_b}$ are less than or equal to zero.	
Moreover, as  $J_B(\jw)$ is positive definite,  the eigenvalues of $(\mathcal{A} L(\jw)  \mathcal{A}^{\top})^*J_B(\jw)^{-1} \mathcal{A} L(\jw)  \mathcal{A}^{\top}-J_B(\jw)$ are \ilf{also} less than or equal to zero\footnote{\ilf{In particular, we exploit the fact that $J_B(\jw)^{-1}P(\jw)-I_{n_b}$, where $P(\jw)=\Xi(\jw)^*J_B(\jw)^{-1}\Xi(\jw)\geq0$ has the same nonzero eigenvalues as $(J_B(\jw))^{-\frac{1}{2}}(P(\jw)-J_B(\jw))(J_B(\jw))^{-\frac{1}{2}}$. Also the latter being negative semidefinite implies  $P(\jw)-J_B(\jw)\leq0$.}} \ilf{leading  to}
$$   (\mathcal{A} L(\jw)  \mathcal{A}^{\top})^*J_B(\jw)^{-1} \mathcal{A} L(\jw)  \mathcal{A}^{\top} \leq J_B(\jw)$$ 	
which can be written   \ilff{in compact}  form as 	
\begin{equation}
\mathcal{A} L(\jw)  \mathcal{A}^{\top}\in \QCinv\left(\Pi^{B}_2(\jw),0\right)
\label{eq:ALA_QC_Pi2}
\end{equation}	with $
\Pi^{B}_2(\jw)=\begin{pmatrix}
-J_B(\jw)    & 0 \\0 & J_B(\jw)^{-1}
\end{pmatrix}.$ }	
\end{enumerate}
\ilf{As} $\mathcal{A} L(\jw)  \mathcal{A}^{\top}$ satisfies \eqref{eq:ALA_QC_Pi1} and~\eqref{eq:ALA_QC_Pi2}, it will  satisfy  $$	\mathcal{A} L(\jw)  \mathcal{A}^{\top}\in \QCinv\left(\gamma_1(\w)\Pi^{B}_1+\gamma_2(\w)\Pi^{B}_2(\jw),0\right)$$ with $\gamma_1(\w)\ge0$ and $\gamma_2(\w)\ge0$. Moreover, to exploit the     diagonal structure of  $\Pi^{B}_1$  and $\Pi^{B}_2(\jw)$, the scalars  $\gamma_1(\w)$ and $\gamma_2(\w)$ can be replaced by two  diagonal matrices $\Gamma_1(\w)= I_2 \otimes \left(\oplus_{j=1}^{n_b}\gamma_{j1}(\w) \right)  $ and $\Gamma_2(\w)=I_2\otimes \left(\oplus_{j=1}^{n_b}\gamma_{j2}(\w)  \right)$, with  $\gamma_{j1}(\w)\geq 0$ and $\gamma_{j2}(\w)\geq 0$ \ilff{associated to}  $\Pi^{B_j}_1$ and $\Pi^{B_j}_2(\jw)$ \ilff{respectively}. We can thus~write   
\kkl{\begin{equation}
\hspace{-0.0cm}\mathcal{A} L(\jw)  \mathcal{A}^{\top} \hspace{-0.125cm}\in  \QCinv\left(\Gamma_1(\w)\Pi^{B}_1+\Gamma_2(\w)\Pi^{B}_2(\jw),0\right)
\label{eq:ALA_QC_Con}
\end{equation}}Therefore,
for~\eqref{eq:Dis_QC_stab_C1}	and \eqref{eq:Dis_QC_stab_C2} of Lemma~\ref{lem:graph_sep_lemma} to hold it is sufficient at each $\w \in \overline{\R}_+$ to find  a   scalar~$\epsilon_B(\w)>0$   such that $$\kkl{	\oplus_{j=1}^{n_b}B_j(\jw)\in \QC\left(\Gamma_1(\w)\Pi^{B}_1+\Gamma_2(\w)\Pi^{B}_2(\jw),\epsilon_B(\w)\right)}.$$
Due to the  diagonal structure of  $\Gamma_1(\w)\Pi^{B}_1+\Gamma_2(\w)\Pi^{B}_2(\jw)$, the previous condition can be decomposed into  $n_b$ conditions given by~\eqref{eq:eqtStat1}
with  $\epsilon_{B_j}(\w)\geq \epsilon_B(\w){>0}$.\\
{Hence if Statement~1 holds for each $\w \in \overline{\R}_+$ then \eqref{eq:Dis_QC_stab_C1}	and \eqref{eq:Dis_QC_stab_C2} hold, and it therefore follows from
Lemma~\ref{lem:graph_sep_lemma} that}
the \mbox{point -1 }  is not included in the eigenloci of the return-ratio  \kkl{$Q_O(s)$ of   interconnection~\eqref{eq:bipartite_eqt} given by~\eqref{eq:OCRR}}.\\[1ex] 
\textbf{$\bullet$ Part~2:} Similarly to {\bf Part~1}, we prove that
if   Statement~2 is satisfied  at each $\w \in \overline{\R}_+$ then it is sufficient
for~\eqref{eq:Dis_QC_stab_C1}	and \eqref{eq:Dis_QC_stab_C2} of Lemma~\ref{lem:graph_sep_lemma} to~hold. This is done by exploiting the {following properties and structure} of interconnection~\eqref{eq:GA}.	
\begin{enumerate}
\item {\it Positivity and symmetry of $A$:} 	
As $A=M M^*\geq 0$ and $A=A^*$, we have  $ -A^*-A\leq 0$ 	which rewrites in its compact form as   
\begin{equation}
A\in \QCinv\left(\Pi^{G}_1,0\right)  
\label{eq:A_QC_Pi1}
\end{equation}
where $\Pi^{G}_1 =I_{n_b} \otimes\Pi^{G_j}_1$ with $\Pi^{G_j}_1$ given  by~\eqref{eq:pi1Gj}. \\

\item {\it Bounding the  induced  $\infty$-norm of $A$:}	Using~\eqref{eq:GA_data} and \eqref{eq:r_ik}, the induced  $\infty$-norm of $A$  defined as the maximum row sum  $ r_k^j \sum_{j=1}^{n_b} r^j_k$ is always equal to 2 since  \mbox{$\sum_{j=1}^{n_b} r^j_k=2$} as the \ilff{graph associated with the microgrid}  is connected. \khl{Therefore,   $\left\| A\right\|_\infty=2$ \ilf{and since
$
\rho(A)\leq\left\| A\right\|_\infty$  with $A$ symmetric}, we obtain    $ A^* A\leq 4 I_{n_bn_\ell}$ which can be written  \ilff{in   compact} form as}
\begin{equation}
A\in \QCinv\left(\Pi^{G}_2,0\right) 
\label{eq:A_QC_Pi2}
\end{equation}
where $\Pi^{G}_2=I_{n_b}\otimes\Pi^{G_j}_2$ with $\Pi^{G_j}_2$ given by~\eqref{eq:pi2Gj}. \\

\item {\it Exploiting the structure of $A$:} To take into account {the fact that} the lines are part of the new subsystems $G_j$, we use  $\Pi^{G}_3(\jw)=I_{n_b}\otimes\Pi^{G_j}_3(\jw)$ with $\Pi^{G_j}_3(\jw)$ given {by~\eqref{eq:pi3Gj}}. With this particular choice of   $\Pi^{G}_3(\jw)$, we can  associate each  $\Pi^k(\jw)$   to  each  line $L_k(\jw)$. Note that  if  $-\Pi^k_{12}(\jw)-\Pi^k_{12}(\jw)^*+ 2~\Pi^k_{22}(\jw)   \leq 0$  together with $ \Pi^k_{11}(\jw) \leq 0$ then   
\begin{equation}
A\in \QCinv\left(\Pi^{G}_3(\jw),0\right) 
\label{eq:A_QC_Pi3}
\end{equation}  is always satisfied. {To see this, note that by} exploiting 
the sparsity    of $A$ and the structure of $\Pi^{G}_3(\jw)$,  the left hand  side of the expanded form  \ilff{of the condition in \eqref{eq:A_QC_Pi3}}    is given by 
\begin{equation*}
\begin{matrix}
I_{n_b}	\otimes ( \oplus_{k=1}^{n_\ell} \Pi^k_{11}(\jw)) +\cdots\\   \cdots+ MM^*(I_{n_b} \otimes ( \oplus_{k=1}^{n_\ell} \Pi^k_{22}(\jw)))MM^* -\cdots\\   \cdots -
M ( \oplus_{k=1}^{n_\ell} \Pi^k_{12}(\jw)) M^*\hspace{-0.1cm}-\hspace{-0.1cm}M ( \oplus_{k=1}^{n_\ell} \Pi^k_{12}(\jw)^*)M^*
\end{matrix}
\end{equation*}		which is equal to 
\begin{equation}
\begin{matrix}
&I_{n_b}	\otimes ( \oplus_{k=1}^{n_\ell} \Pi^k_{11}(\jw)) +\cdots \\& \hspace{-0.25cm}\cdots\hspace{-0.05cm}+\hspace{-0.05cm}
M \big( \oplus_{k=1}^{n_\ell}\big(\hspace{-0.05cm}-\hspace{-0.05cm} \Pi^k_{12}(\jw)\hspace{-0.05cm}-\hspace{-0.05cm} \Pi^k_{12}(\jw)^*\hspace{-0.05cm}+\hspace{-0.05cm}\cdots \\& ~~~~~~~~~~~~ \hspace{-0.5cm}\cdots+ 2 \Pi^k_{22}(\jw)\big)\big)M^*.
\end{matrix}
\label{eq:AforPi3}
\end{equation}	
Hence   $ \Pi^k_{11}(\jw) \leq0$  together with $-\Pi^k_{12}(\jw)-\Pi^k_{12}(\jw)^*+ 2 \Pi^k_{22}(\jw)    \leq 0$   are sufficient for~\eqref{eq:AforPi3} to be negative semi-definite and {hence} for condition~\eqref{eq:A_QC_Pi3}  to hold.
\end{enumerate}
Therefore,  as $A$ satisfies~\eqref{eq:A_QC_Pi1}, \eqref{eq:A_QC_Pi2} and \eqref{eq:A_QC_Pi3}   and using similar arguments to those {in} \textbf{Part~1},   we obtain  
\kkl{	\begin{equation}
\begin{matrix}
A\in  \QCinv	
\left(\Delta_{1}(\w)\Pi^{G}_1+\Delta_{2}(\w)\Pi^{G}_2+\cdots ~~~~\right. \\\cdots+\left.  \Delta_{3}(\w) \Pi^{G}_3(\jw),\epsilon_{G}(\w) \right)
\end{matrix}
\label{eq:A_QC_Con}
\end{equation}}where
$\Delta_i(\w)= I_{2n_\ell} \otimes  \left(\oplus_{j=1}^{n_b}\delta_{ji}(\w) \right)  $ with $i\in \{1,2,3\}$, {
\ilc{and} $\delta_{ji}(\w)\geq0$.}	
Therefore,  {for~\eqref{eq:Dis_QC_stab_C1}	and \eqref{eq:Dis_QC_stab_C2} in Lemma~\ref{lem:graph_sep_lemma} to hold it is sufficient at} each $\w \in \overline{\R}_+$ to find  a   scalar~$\epsilon_G(\w)>0$   such that 
\kkl{	\begin{equation*}
\begin{matrix}
\oplus_{j=1}^{n_b}G_j(\jw)\in \QC	
\left(\Delta_{1}(\w)\Pi^{G}_1+\Delta_{2}(\w)\Pi^{G}_2+\cdots ~~~~\right. \\\cdots+\left.  \Delta_{3}(\w) \Pi^{G}_3(\jw),\epsilon_{G}(\w) \right).
\end{matrix}
\end{equation*}}
Due to the  diagonal structure of  $\Delta_i$ and $\Pi^{G}_i$, the previous condition can be decomposed into  $n_b$ conditions given by~\eqref{eq:eqtStat2}
with  $\epsilon_{G_j}(\w)\geq \epsilon_G(\w){>0}$. \\
{Hence if Statement~2 holds for each $\w \in \overline{\R}_+$ then \eqref{eq:Dis_QC_stab_C1}	and~ \eqref{eq:Dis_QC_stab_C2} hold, and it therefore follows from
Lemma~\ref{lem:graph_sep_lemma} that}
the \mbox{point -1 } is not included in the eigenloci of the return-ratio \kkl{$Q_N(s)$  of  interconnection~\eqref{eq:GA} given~by~\eqref{eq:NCRR}}.\\[1ex]
\textbf{$\bullet$ Part~3:} We {show} 
in this part that if   the \mbox{point -1 }  is not included in the eigenloci  of \kkl{$Q_N(s)$ in~\eqref{eq:NCRR}}, then it is also not included in   the eigenloci of \kkl{$Q_O(s)$~in~\eqref{eq:OCRR}}.  
A simple argument shows  that both return-ratios have the same non-zero eigenvalues. 
In particular, the  matrix  $G(s) MM^* $  in \kl{$Q_N(s)$} has the same nonzero eigenvalues as
$ M^* G(s) M $ which  rewrites  as    $ \sum_{j=1}^{\ilc{n_b}} M_j  G_j(s) M_j^*$.
{Then} using the expressions of $G_j(s)$ and $M_j$ given by~\eqref{eq:G_j} and~\eqref{eq:r_ik} and the fact that  $L(s)M_j=M_j L(s)$, the previous summation becomes   $L(s)\sum_{j=1}^{n_b}   M_j(a^r_{j})^{\top} B_j(s) a^r_{j}  M_j^*$  which rewrites as
$ L(s) \mathcal{A}^{\top} B(s)\mathcal{A}$. Finally, note that   $ L(s) \mathcal{A}^{\top} B(s)\mathcal{A}$   has the same nonzero eigenvalues as
$ B(s) \mathcal{A}  L(s)  \mathcal{A}^{\top} $ which is the return-ratio \kkl{$Q_O(s)$}.  Hence, 	if the \mbox{point -1 }  is not included in the eigenloci of the return-ratio~\kkl{$Q_N(s)$}  of interconnection~\eqref{eq:GA}, then it is also not included in the eigenloci of the return-ratio~\kkl{$Q_O(s)$}  of     interconnection~\eqref{eq:bipartite_eqt}.\\[1ex]
\textbf{$\bullet$ Part~4:}
{We show now using a homotopy argument that when at each frequency Statement~1 or Statement~2 is satisfied then the point $-1$ is also not encircled by the eigenloci of the return-ratio of   \eqref{eq:bipartite_eqt} or~\eqref{eq:GA}, and hence stability can be deduced.}
We define  the following linear  homotopy for   interconnection~\eqref{eq:bipartite_eqt}: $\mathcal{A} L_\tau(\jw) \mathcal{A}^{\top}=\mathcal{A} L(\jw) \mathcal{A}^{\top}$ and $B_{j,\tau}(\jw)=\tau B_{j}(\jw)$ with $\tau\in[0,1]$. Similarly, we define \ilc{an analogous} homotopy for the  interconnection~\eqref{eq:GA} as $A_\tau=A$ and $G_{j,\tau}(\jw)=\tau {G_j}(\jw)$ with the same $\tau\in[0,1]$. We will show that \ilc{throughout these} homotopies,  {condition}~\eqref{eq:eqtStat1} together with~\eqref{eq:ALA_QC_Con} and  {condition}~\eqref{eq:eqtStat2} together with~\eqref{eq:A_QC_Con}  remain satisfied, \ilc{i.e. when}   $B_j(\jw)$, $\mathcal{A} L(\jw) \mathcal{A}^{\top} $, $B_j(\jw)$ and  $A$ and  are replaced by $B_{j,\tau}(\jw)$, $\mathcal{A} L_\tau(\jw) \mathcal{A}^{\top}$,  $G_{j,\tau}(\jw)$ and $A_\tau$ respectively. \\
Conditions~\eqref{eq:ALA_QC_Con} and \eqref{eq:A_QC_Con} are always satisfied \ilc{when $\gamma_{ji}(\w)\geq 0$, $\delta_{ji}(\w)\geq 0$,  $J_{B_j}(\jw)$ is given by~\eqref{eq:JBj}, $ \Pi^k_{11}(\jw) \leq0$, and} $-\Pi^k_{12}(\jw)-\Pi^k_{12}(\jw)^*+ 2 \Pi^k_{22}(\jw)    \leq 0$.  \\[1ex]
On the other hand, conditions~\eqref{eq:eqtStat1} and \eqref{eq:eqtStat2} rewrite in their expanded forms as
$\Phi_j(\jw) \geq0$  and  $\Psi_j(\jw) \geq0$  with $\Phi_j(\jw)$ and $\Psi_j(\jw)$ given by~
\begin{equation*}
\hspace{-1.5cm}
\begin{matrix}
&\Phi_j(\jw) =-\tau^2\epsilon_{B_j}(\w) B_j(\jw)^*B_j(\jw)+\cdots\\&\cdots+	\begin{pmatrix}\tau B_j(\jw) \\ 1  \end{pmatrix}^*	
\Big( \gamma_{j1}(\w)\Pi^{B_j}_1+\cdots\\ &~~~~~~~~~~~~\cdots+ \gamma_{j2}(\w)\Pi^{B_j}_2 (\jw)\Big)
\begin{pmatrix}\tau B_j(\jw) \\1  \end{pmatrix}
\end{matrix}
\end{equation*}  
and		\begin{equation*}
\begin{matrix}
&\Psi_j(\jw) =-\tau^2\epsilon_{G_j}(\w) G_j(\jw)^*G_j(\jw)+\cdots\\&\cdots+	\begin{pmatrix}\tau G_j(\jw)\\I  \end{pmatrix}^*  	\Big( \delta_{j1}(\w)\Pi^{G_j}_1+\cdots\\ &~~~~~~~~~~~~\cdots+\delta_{j2}(\w) \Pi^{G_j}_2+\delta_{j3}(\w)\Pi^{G_j}_3 (\jw)\Big)
\begin{pmatrix} \tau G_j(\jw)\\I  \end{pmatrix} 
\end{matrix}
\end{equation*}

For $\tau=1$,  $\Phi_j(\jw) \geq0$  and  $\Psi_j(\jw) \geq0$ are satisfied from \textbf{Part~1} and \textbf{Part~2}.\\
For $\tau=0$,  $\Phi_j(\jw) \geq0$  and  $\Psi_j(\jw) \geq0$ are also  satisfied as   
\kkl{$\gamma_{j1}(\w)\big( \Pi^{B_j}_1\big)_{22}+\gamma_{j2}(\w)\big( \Pi^{B_j}_2 (\jw)\big)_{22}\geq0$}
~~and ~~
\kkl{$\delta_{j1}(\w)\big( \Pi^{G_j}_1\big)_{22}+\delta_{j2}(\w)\big( \Pi^{G_j}_2\big)_{22}+\delta_{j3}(\w)\big( \Pi^{G_j}_i (\jw)\big)_{22}\geq0$}	 
where $\big( \Pi^{B_j}_i\big)_{22}$ and $\big( \Pi^{G_j}_3 \big)_{22}$ are the lower right blocks of  $\Pi^{B_j}_i (\jw)$ and $\Pi^{G_j}_i (\jw)$ respectively.\\
For $\tau\in(0,1)$,  $\Phi_j(\jw) \geq0$  and  $\Psi_j(\jw) \geq0$ are also satisfied   since $\Phi_j(\jw)$ and $\Psi_j(\jw)$ are concave in~$\tau$ as 
\kkl{$\gamma_{j1}(\w)\big( \Pi^{B_j}_1\big)_{11}+\gamma_{j2}(\w)\big( \Pi^{B_j}_2 (\jw)\big)_{11}\leq0$} and   
\kkl{$\delta_{j1}(\w)\big( \Pi^{G_j}_1\big)_{11}+\delta_{j2}(\w)\big( \Pi^{G_j}_2\big)_{11}+\delta_{j3}(\w)\big( \Pi^{G_j}_3 (\jw)\big)_{11}\leq~0$}.
Therefore, {condition} \eqref{eq:eqtStat1} together with~\eqref{eq:ALA_QC_Con} and {condition}~\eqref{eq:eqtStat2} together with~\eqref{eq:A_QC_Con}   remain satisfied when using the aforementioned homotopies, {and hence} the \mbox{point -1 } remains not included in the corresponding {eigenloci of the return-ratio.} Moreover, using the result of \textbf{Part~3},  if either Statement~1 or Statement~2 are satisfied {at each frequency} then the point $-1$ remains not included in the {eigenloci of the return-ratio of} interconnection~\eqref{eq:bipartite_eqt} and {hence} the winding number of  the point $-1$ does not~change {throughout the homotopies described above}.  {Therefore, since the winding number is zero for $\tau=0$, it is also zero for $\tau=1$.}

To summarize, 	if either Statement~1 or Statement~2 holds  at each frequency ${\omega \in \overline{\R}_+}$ then the eigenloci of $ B(\jw) \mathcal{A}  L(\jw)  \mathcal{A}^{\top} $  do not include  the point -1 and do not encircle it. Hence,  from  the  multivariable Nyquist criterion~\cite{DeY:80}  it follows that the closed-loop transfer functions  of interconnection~\eqref{eq:bipartite_eqt} have no poles in~$\overline{\C}_+$.  Therefore,   
the equilibrium~\eqref{eq:equilibrium} of the power system~\eqref{eq:Line_dynamics}-\eqref{eq:bus_dynmics} with its  small-signal model~\eqref{eq:bipartite_eqt} is locally asymptotically stable which concludes the proof of Theorem~\ref{thm:Main_result_1}.  $\qed$

\section{Proof of Theorem~\ref{thm:Main_result_2}}	\label{app:Thm2_proof}	
\kl{The proof is similar to the proof of Theorem~\ref{thm:Main_result_1}, however the
presence of \ilff{an} integrator  introduces additional complications in the analysis that need to be explicitly addressed.  }

\kkl{Consider the small-signal model~\eqref{eq:bipartite_eqtNN} and consider the following decomposition of $L(s)$ \ilc{and $\Theta_L(s)$}
$$L(s)=\dfrac{1}{s} H(s)~~~~~~~~~~\Theta_L(s)=\dfrac{1}{s} \Theta_H(s).$$
The voltage and the current deviations $	\overline{V_B}(s)$ and $\overline{I_L}(s)$ can be written in terms of the initial conditions $\overline{x_B}(0)$ and  $\overline{x_L}(0)$  as	
\begin{equation}
\begin{pmatrix}
\overline{V_B}(s)\\\overline{I_L}(s)
\end{pmatrix}= \chi(s)
\begin{pmatrix}	\overline{x_B}(0)\\ \overline{x_L}(0) \end{pmatrix}  
\label{eq:LaplaceOfImpulseResponse_N}
\end{equation}
with
\begin{equation*}
\begin{split}
&\chi_{11}(s)=\left(I+B(s)\mathcal{A} s^{-1}H(s)\mathcal{A}^{\top} \right)^{-1} \Theta_B(s)\\
&\chi_{12}(s)=-\left(I+B(s)\mathcal{A} s^{-1}H(s)\mathcal{A}^{\top} \right)^{-1}B(s)\mathcal{A}~s^{-1}\Theta_H(s)\\
&\chi_{21}(s)=\left(I+ s^{-1}H(s)\mathcal{A}^{\top}B(s)\mathcal{A} \right)^{-1}s^{-1}H(s)\mathcal{A}^{T}~\Theta_B(s) \\
&\chi_{22}(s)=\left(I+ s^{-1}H(s)\mathcal{A}^{\top}B(s)\mathcal{A} \right)^{-1}s^{-1} \Theta_H(s)
\end{split}
\end{equation*}}\kkl{with  $\Theta_B(s)$ given  by~\eqref{eq:Theta_B}  and $\Theta_H(s)=s~\Theta_L(s)$ with $\Theta_L(s)$  given  by~\eqref{eq:Theta_L}.  Note that  $B(s)$, $\Theta_B(s)$, $H(s)$ and $\Theta_H(s)$ have no poles in~$\overline{\C}_+$ from Assumption~\ref{assump:stab_bus} and Assumption~\ref{assump:line_general_dynamic}}.

\kl{The proof of Theorem~\ref{thm:Main_result_2} \ilff{has two  parts}.  We show in  {\bf Part~1}
that the function~$\chi(s)$ has at most one  pole  at $s=0$  \ilff{using ideas
analogous to those in~\cite{DKAL:17}}, and we show in  {\bf Part~2}
that $\chi(s)$ has no poles in the closed right half-plane excluding the origin \ie~$\overline{\C}_+\setminus\{0\}$. \kkl{These
results are   used     to deduce the convergence of $\overline{v_B}(t)$ and~$\overline{i_L}(t)$
to a \ilc{constant value}.}}\\[1ex] 
\textbf{$\bullet$ Part~1:} We show in this part that $\chi(s)$ in~\eqref{eq:LaplaceOfImpulseResponse_N} has  one simple pole  at 0.  This is done in two steps.
\begin{itemize}
\item \textit{Step~1: } \kl{We show   that $\left(I+B(s)\mathcal{A} s^{-1}H(s)\mathcal{A}^{\top} \right)^{-1}$ and $\left(I+ s^{-1}H(s)\mathcal{A}^{\top}B(s)\mathcal{A} \right)^{-1}$ have no poles at~$s=0$. To do this, 
we  start by showing that $\left(I+B(s)\mathcal{A} s^{-1}H(s)\mathcal{A}^{\top} \right)$    has no zeros at $s=0$.} \\     
\kl{Note that $ s\left(I+B(s)\mathcal{A} s^{-1}H(s)\mathcal{A}^{\top} \right)$ is equal to $ \left(sI+B(s)\mathcal{A} H(s)\mathcal{A}^{\top} \right)$. Since the underlying graph is \ic{connected} 
then $\mathcal{A}H(\j 0)  \mathcal{A}^{\top}$  has a  simple eigenvalue at \ic{the origin.} 
Using the fact that  $B(\j0)=\oplus_{j=1}^{n_b}B_j(\j0)>0$ \ic{(see Remark~\ref{rem:passivity_lowFreq})} we have that  $ s\left(I+B(s)\mathcal{A} s^{-1}H(s)\mathcal{A}^{\top} \right)$  has a simple zero  at $s=0$, hence $\left(I+B(s)\mathcal{A} s^{-1}H(s)\mathcal{A}^{\top} \right)$ has no zeros at $s=0$ and $\left(I+B(s)\mathcal{A} s^{-1}H(s)\mathcal{A}^{\top} \right)^{-1}$ has no poles at $s=0$.    The reasoning is  the  same to show that~$\left(I+ s^{-1}H(s)\mathcal{A}^{\top}B(s)\mathcal{A} \right)^{-1}$ has no poles at $s=0$. } \\
\item \textit{Step~2: } \kl{We show in this step that $\chi_{12}(s)$ and  $\chi_{21}(s)$ have no poles at $s=0$.  	
For this purpose, we use the following limit from~\cite{CaM:09}: for a complex  non-square matrix $\Lambda$, the limit $\lim_{s\rightarrow 0}(sI+\Lambda^*\Lambda)^{-1}\Lambda^*$ is equal to the pseudo-inverse of $\Lambda$. We start  with $\lim_{s\rightarrow 0} \chi_{12}(s)$. \kkl{\ilc{We recall} that $B(s)$, $H(s)$ and $\Theta_H(s)$ have no poles at $s=0$ and \ilc{note} that\footnote{\kkl{This can be deduced from Assumption~\ref{assump:line_general_dynamic} and the decomposition of $L(s)=s^{-1}H(s)$.}} $H(\j0)>0$.   Therefore,   $\lim_{s\rightarrow 0}\chi_{12}(s)= \lim_{s\rightarrow 0} U (sI+V^*V)^{-1}V^*W^{-1} \Theta_H(\j0)$ with $U=\left( B(\j0)\right)^{\frac{1}{2}}$, $W= \left(H(\j0)\right)^{\frac{1}{2}}$ and $V^*=U\mathcal{A} W$;  \ilff{we thus see} that $\chi_{12}(s)$ exists at $s=0$ and hence $\chi_{12}(s)$ has no poles at $s=0$.}
Using a similar argument, we can show that  $\chi_{21}(s)$ has no poles at $s=0$.}  	 
\end{itemize}

\kl{ Therefore, \kkl{from the  previous  steps}, we conclude that $\chi_{11}(s)$,   $\chi_{12}(s)$ and  $\chi_{21}(s)$  have no poles at $s=0$ while  $\chi_{22}(s)$ has one simple pole at $s=0$ and hence the function~$\chi(s)$ in~\eqref{eq:LaplaceOfImpulseResponse_N} has one simple pole at $s=0$.}\\[1ex]

\kl{\textbf{$\bullet$ Part~2:} We show in this part that if at least one of the statements of Theorem~\ref{thm:Main_result_1} is satisfied for  \icl{each}~${\omega \in \overline{\R}_+\setminus\{0\}}$  and if $B_j(\j0)>0$, then	$\chi(s)$ of~\eqref{eq:LaplaceOfImpulseResponse_N} has no poles \icl{in
$\overline{\C}_+\setminus\{0\}$.}
For this purpose, we adopt \kkl{the \ilc{notation below that is} used to} 
define a modified  Nyquist contour.
\begin{itemize}
\item  $C_R$, with \icl{$R>0$ sufficiently large,} 
is the semi-circle centered at the origin  of radius $R$ in the  right-half plane, that is
$C_R=\{ s \in  \C :  \left| s \right| =R,   \text{Re}(s)\geq0\}$ where $\text{Re}(s)$ denotes the real part of $s$.
\item $c_r (\j 0)$, with \icl{$r>0$ sufficiently small,} 
\mbox{is the semi-circle} centered at the origin  of radius $r$  in the  right-half plane, that~is {$c_r (\j 0)=\{ s \in \C : \left| s\right| =r,   \text{Re}(s)>0\}$}.
\item $C_{\ell\setminus r}$ is the  contour parameterized by \icl{$r$} 
defined by $C_{\ell\setminus r}=\j(-\infty,-r] \cup \j[+r,\infty) $ 		which is a straight line on the imaginary axis excluding the segment $\j(-r,+r)$.
\end{itemize}
The modified Nyquist contour is given by
\begin{equation}
C_N= C_{\ell\setminus r} \cup C_R \cup     c_r (\j 0) .
\label{eq:Modified_Nyquist_Countour}
\end{equation}}		
\kl{As    $B(s)\mathcal{A} s^{-1}H(s)\mathcal{A}^{\top}$ and $s^{-1}H(s)\mathcal{A}^{\top}B(s)\mathcal{A}$  have the same nonzero eigenvalues,    $\left(I+B(s)\mathcal{A} s^{-1}H(s)\mathcal{A}^{\top} \right)^{-1}$ has the same poles as $\left(I+ s^{-1}H(s)\mathcal{A}^{\top}B(s)\mathcal{A} \right)^{-1}$. Therefore,
to show that $\chi(s)$ of~\eqref{eq:LaplaceOfImpulseResponse_N} has no poles in $\overline{\C}_+\setminus\{0\}$, \ilff{we only need to show that  $\left(I+B(s)\mathcal{A} s^{-1}H(s)\mathcal{A}^{\top} \right)^{-1}$   has no poles in~$\overline{\C}_+\setminus\{0\}$,  
since $B(s)$, $H(s)$, $\Theta_B(s)$ and $\Theta_H(s)$ have no poles in~$\overline{\C}_+$}. To do this, we define the return-ratio
\begin{equation*}
Q(s)=B(s)\mathcal{A} s^{-1}H(s)\mathcal{A}^{\top}\label{eq:OCRRN}
\end{equation*}
and we show that the \mbox{point~-1} is not included in the eigenloci of $Q(s)$ when $Q(s)$ is evaluated  along  the modified Nyquist contour~$C_N$  in~\eqref{eq:Modified_Nyquist_Countour}. \ic{Furthermore, we show that the point $-1$ is not included in the eigenloci when a linear homotopy is carried out from the origin.} This is done in three steps.}
\begin{itemize}
\item \textit{Step~1: } \kl{We   consider the contribution of the straight   line  $C_{\ell\setminus r}$  to the eigenloci. This portion  is included in $\jR\setminus\{\j  0\}$. 
\icl{Due to} the symmetry of  the eigenloci  about the real axis, \icl{it is sufficient to evaluate  $Q(s)$  over $\jR_+\setminus\{\j  0\}$.}} \\
\kl{Using \icl{arguments \ic{analogous} to those} in  \textbf{Part~1},  \textbf{Part~2} and  \textbf{Part~3} of  the proof of Theorem~\ref{thm:Main_result_1}, we   \ic{deduce}
that if  at least one of the statements of Theorem~\ref{thm:Main_result_1} is satisfied  \ic{for each}~${\omega \in \overline{\R}_+\setminus\{0\}}$ then  the \mbox{point~-1} is not included in the eigenloci of the return-ratio $Q(s)$ when evaluated on $C_{\ell\setminus r}$.		
We now consider  the linear homotopy where each $B_j(s)$ is replaced by  $\tau B_j(s)$ with $\tau \in[0,1]$. Using \ic{arguments analogous to those} in  \textbf{Part~4} in  the proof  of  Theorem~\ref{thm:Main_result_1}, we \ic{deduce} that  if either Statement~1 or Statement~2 are satisfied \ic{at each} frequency ${\omega \in \overline{\R}_+\setminus\{0\}}$, then the \mbox{point~-1} remains not included in the eigenloci of $Q(s)$ as $\tau$ changes continuously in~$[0,1]$.}\\

\item \textit{Step~2: }\kl{
We consider  now the contribution  of  \ic{$C_R$
to the eigenloci as $R\rightarrow\infty$}. \ic{Since $Q(s)$ is proper, we have
$Q(s)\rightarrow  Q_{\infty}$, as   $R \rightarrow \infty$,  where $Q_{\infty}=Q(\j\infty)$ is a constant matrix. Hence,} the  eigenvalues   of $Q(s)$  evaluated along  $C_R$
tend to constant  points equal to the eigenvalues of~$Q_{\infty}$.
\ic{Therefore,} from \textit{Step~1}, if $-1\notin \lambda_i  (Q(\j\infty))$ then $-1\notin\lambda_i (Q(s))$ for $|s|\geq R_0$ with $R_0$ \ic{a sufficiently large number}.\\ Moreover, \ic{when we consider} 
the 
homotopy \ic{described in \textit{Step~1}} with $\tau \in[0,1]$, we have that  \ic{$Q_{\infty}=\tau Q(\j\infty)$}
\ic{and the \mbox{point~-1} remains not included in $\lambda_i  (Q_{\infty})$ throughout this homotopy. Hence, $-1\notin\lambda_i (\tau Q(s))$ for $|s|\geq R_0$, for $\tau\in[0,1]$ with $R_0$ a sufficiently large number.}}\\

\item \textit{Step~3: }	\kl{We now investigate   the contribution of the  semi-circle $c_r(\j 0)$   as $r \rightarrow 0$. 	When traversing the semi-circle  $c_r(\j 0)$ corresponding to the pole at $s=0$, 
\ic{each eigenlocus $\lambda_k\left(Q(c_r) \right)$}   
has a magnitude that tends to $\infty$ as $r\rightarrow0$   while its argument    changes  by $\pi$ radians. Moreover,  \kkl{we know that $H(\j0)>0$}  and  that \ic{for each $j$} we have  $B_j(\j0)>0$, whence by continuity of the transfer function there exists sufficiently small~$\mu$ such that $\frac{B_j(\j\nu)}{\nu}>0$  and $\mathcal{A}H(\j\nu) \mathcal{A}^{\top}\geq 0$ for all \ic{$\nu \in (0,\mu)$.}   Therefore,  the arc corresponding to the eigenlocus along $c_r (\j 0)$, \ic{for $r$ sufficiently small}
is closed through the right half-plane which lies to the right of the \mbox{point~-1}.\\ Now, if we  replace each $B_j(s)$  by  $\tau B_j(s)$ with $\tau \in[0,1]$, \ic{the eigenloci of the return ratio $Q(s)$, when the latter is evaluated along $c_r (\j 0)$, remain in the right half-plane and hence do not include the point~$-1$.}}

\end{itemize} 

\kl{From the previous \ic{steps we have that 
if} the conditions of Theorem~\ref{thm:Main_result_2} are satisfied then  the  point -1 is not included in the eigenloci of $Q(s)$ when $Q(s)$ is evaluated  along  the modified Nyquist contour~$C_N$ \ic{in}~\eqref{eq:Modified_Nyquist_Countour} with \ic{$R>0$ sufficiently large and $r>0$ sufficiently small.}
Moreover,  \ic{when the homotopy described in the previous steps is carried out  the eigenloci still do not include the point~$-1$. Therefore,} the winding number of  the point $-1$ does not~change \ic{throughout this homotopy} and remains equal to zero\footnote{Since the winding number is zero for $\tau=0$, it is also zero for $\tau=1$.}.  
Therefore,   from  the  multivariable Nyquist criterion~\cite{DeY:80},  it follows that $\left(I+B(s)\mathcal{A} s^{-1}H(s)\mathcal{A}^{\top} \right)^{-1}$ and $\left(I+ s^{-1}H(s)\mathcal{A}^{\top}B(s)\mathcal{A} \right)^{-1}$  have no poles in
$\overline{\C}_+\setminus\{0\}$ and consequently  $\chi(s)$ in~\eqref{eq:LaplaceOfImpulseResponse_N} has no poles in
$\overline{\C}_+\setminus\{0\}$ as well. }

\kkl{To summarize,  we deduce from \textbf{Part~1} and \textbf{Part~2} that when the conditions {\it C1} and {\it C2}   of Theorem~\ref{thm:Main_result_2} are satisfied then  $\chi(s)$ has no poles in~$\overline{\C}_+$ except   $\chi_{22}(s)$ which has a simple pole at $s=0$. Therefore,  for all $\overline{x_B}(0)$ and   $\overline{x_L}(0)$, we have $\overline{v_B}(t)\rightarrow \overline{v_{B}}_\infty$   as $t\rightarrow \infty$ with ${\overline{v_{B}}_\infty=0}$.
For   $\overline{i_L}(t)$, \ilc{due  to} the simple pole at origin  of $\chi_{22}(s)$, we have  $\overline{i_L}(t)\rightarrow \overline{i_{L}}_\infty$    as $t\rightarrow \infty$ with ${\overline{i_{L}}_\infty}$ some constant  depending on $\overline{x_L}(0)$. Therefore,   for all initial conditions $\overline{x_B}(0)$ and   $\overline{x_L}(0)$,
the voltage and the current deviations    $\overline{v_B}(t)$ and  $\overline{i_L}(t)$  converge to a \ilc{constant 
value}, \ilc{which completes the} proof of Theorem~\ref{thm:Main_result_2}.  $\qed$}

\addtolength{\baselineskip}{-2pt}
\bibliographystyle{unsrt}
\balance
\bibliography{KL507_bib}
\addtolength{\baselineskip}{2pt}

\end{document}